\def\m{\mathcal}
\def\C{\mathbb{C}}
\def\c2{\mathbb{C}^2}
\def\R{\mathbb{R}}
\def\N{\mathbb{N}}
\def\N{\mathbb{N}}
\def\1{\bold{1}}
\def\a{\alpha}
\def\e{\varepsilon}
\def\f{\varphi}
\def\G{\Gamma}
\def\p{\psi}
\newcommand \W {\Omega}
\newcommand \mE {\mathcal E}
\newcommand \mF {\mathcal F}
\newcommand \mG {\mathbb G}
\newcommand \vk {\chi}
 \newcommand \Ri{ \Rightarrow }
\newcommand \Sub {\Subset}
\newcommand \sub {\subset}
\newcommand \sm {\setminus}
\newcommand \ove {\overline}
\newcommand \mrm {\mathrm}
\newtheorem{lem}{Lemma}[section]
\newtheorem{pro}[lem]{Proposition}
\theoremstyle{definition}
\newtheorem{defi}[lem]{Definition}
\theoremstyle{plain}
\newtheorem{def/not}[lem]{Definition/Notations}
\numberwithin{equation}{section}
\newtheorem{thm}[lem]{Theorem}
\newtheorem{cor}[lem]{Corollary}
\newenvironment{proof3.1}
{\noindent {\it{Proof of theorem 3.1}}}{$\Box$ \linebreak[4]}
\begin{document}

\title[Weighted Pluricomplex energy II ]
{Weighted Pluricomplex energy II  }
\author{ Slimane BENELKOURCHI }
\address{
 Université de Montréal,
Pavillon 3744, rue Jean-Brillant,
Montréal QC  H3C 3J7.
}
 
\email{slimane.benelkourchi@umontreal.ca}
 \subjclass[2010]{ 32W20, 32U05,
32U15.} \keywords{Complex Monge-Ampère operator, plurisubharmonic
functions, pluricomplex energy, Dirichlet problem.}
\maketitle
\begin{abstract}
We continue our study of the Complex Monge-Ampère Operator on the Weighted Pluricomplex energy classes.
We give more characterizations of the range of the classes $\m E_ \chi$ by the Complex Monge-Ampère Operator. In 
particular, we prove that a non-negative Borel measure $\mu $ is the Monge-Ampère of a unique function 
$\f \in \m{ E}_\chi$ if and only if $\chi(\m E _\chi ) \sub L^1(d\mu ).$
Then we show that if $\mu = (dd^c \f )^n $ for some $\f \in \m E_\chi $ then
  $\mu = (dd^c u )^n $ for some $u \in \m E_\chi (f)$ where $f$ is a given boundary data. If moreover, the non-negative
Borel measure$\mu $ is suitably dominated by the Monge-Ampère 
capacity, we establish a priori estimates on the capacity of sub-level sets of the solutions.
As consequence, we give a priori bounds of the solution of the Dirichlet problem in the case when the
measure has a density in some Orlicz space. 
\end{abstract}
\section{Introduction}
Let $\W \sub \C^n $ be a bounded hyperconvex domain, i.e. a connected, bounded open subset of $\C ^n $ 
such that there exists a negative plurisubharmonic function $\rho $  such that 
$\{ z\in \W ; \rho (z) < -c\} \Sub \W , $ $ \forall c>0.$ Such a function $\rho $ 
is called an exhaustion function. We let $PSH(\W) $  denote the cone of plurisubharmonic functions (psh for short)
on $\W$ and $PSH^- (\W)$ denote the subclass of negative functions.
athcal
As known (see \cite{BT1} and \cite{BT2}), the complex Monge-Ampère operator $(dd^c \cdot )^n$ is well defined, as a non-negative measure, on the set of locally bounded plurisubharmonic functions.
Therefore the  question of describing the measures which are the Monge-Ampère of bounded psh functions is very 
important for pluripotential theory, complex dynamic... This problem has been studied extensively by various authors,
  see for example  \cite{BT2}, 
 \cite{K94}, \cite{K98}, \cite{K05}, \cite{Cz9}... and reference therein.
 In \cite{Ce98}, Cegrell introduced the pluricomplex energy classes $\m E_p(\W)$ and $\m F_p (\W)$ ($p \ge 1$) on
which the complex Monge–Amp\`ere operator is well defined. 
 He proved that a measure $\mu$ is the Monge-Amp\`ere of some function $u \in \m E_p (\W)$ if and only if it satisfies 
\begin{equation} \label{Ce}
\int_\W (-v)^p d \mu \le Const \left( \int _\W (-v)^p (dd^c v)^n \right) ^{p \over (n+p)} , \quad \forall v \in \m E_0(\W),
\end{equation}
where $\m E_0(\W)$ is the cone of all bounded psh functions $\f$ defined on the domain 
$\W$ with finite total Monge-Ampère mass and $\lim _{z \to \zeta } \f(z) = 0,$
for every $\zeta \in \partial \W.$ Recently, \AA hag,  Cegrell and  Czy\.z  in \cite{ACC} proved that, in the case $p=1$ the inequality (\ref{Ce})
is equivalent to $\m E_1 (\W ) ) \sub L^1 ( d\mu ).$ In this note, our first objective  is to extend this result by showing 
 that, for all positive number $p ,$ the inequality (\ref{Ce})
is equivalent to  $\m E_p (\W ) ) \sub L^p ( d\mu ).$ In fact, we prove some more general result.
Given  a non-decreasing function  $\chi : \R^-  \to \R^- ,$  we consider
 the set $\m E_\chi (\W ) $
  of plurisubharmonic functions  of finite $\chi - $weighted Monge-Ampère energy 
and, in some sense, has boundary values zero. These are the functions $u \in PSH(\Omega)$ for which
 there exists a decreasing sequence $u_j \in {\mathcal E}_0(\Omega)$ with limit 
  $u$  and 
 $$
\sup_{j \in \N} \int _ {\W}-\chi( u_j) \, (dd^c u_j )^n
<\infty.
$$
 
Then we have the following characterization of the image of the complex Monge-Ampère acting in the class $\m E _\chi (\W) $.
\begin{thm} \label{thm1}
Let $\chi : \ \R^- \to \R^- $ be
an increasing convex or homogeneous  function such that $\chi(-\infty)
 = -\infty .$
  The following assertions are equivalent:\\

(1)
 there exists  a unique function $\f \in \m E_\chi(\W)$ such
that $\mu = (dd^c \f )^n$;\\

(2) $\chi (\m E_\chi (\W ) ) \sub L^1 ( d\mu ).$
\end{thm}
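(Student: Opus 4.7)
\vskip .2cm \noindent \textit{Proof proposal.}
The plan is to prove the two implications separately, both relying on the Cegrell-type energy estimates previously developed for $\mathcal E_\chi(\W)$.

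The direction $(1)\Rightarrow(2)$ will follow from an integration-by-parts estimate. Assuming $\mu=(dd^c\f)^n$ with $\f\in\mathcal E_\chi$, I would take an arbitrary $u\in\mathcal E_\chi$ and a defining decreasing sequence $\f_j,u_j\in\mathcal E_0$. The goal is to bound $\int_\W(-\chi(u))\,(dd^c\f)^n$ by a quantity involving the $\chi$-energies $E_\chi(\f):=\int(-\chi(\f))(dd^c\f)^n$ and $E_\chi(u)$. For bounded $u_j,\f_j$ one expands
$\int(-\chi(u_j))(dd^c\f_j)^n$ by integration by parts and applies the Cauchy--Schwarz / H\"older type inequalities which are standard in this setting (these inequalities are precisely where the convexity or homogeneity of $\chi$ is used). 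This yields a fundamental bound of the form
\[
\int_\W (-\chi(u))\,(dd^c \f)^n \le C\, E_\chi(u)^{\a}\, E_\chi(\f)^{1-\a},
\]
for some $\a\in(0,1)$ depending on $n$. Letting $u_j\downarrow u$ and $\f_j\downarrow\f$ and using monotone convergence on both sides gives $\chi(u)\in L^1(d\mu)$.

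The harder direction is $(2)\Rightarrow(1)$. I would proceed constructively. First, truncate and regularize: choose an exhaustion $\W_j\Sub\W$ and smooth functions $0\le\vk_j\le 1$ with compact support, and set $\mu_j:=\vk_j\mu\wedge\1_{\{\rho>-j\}}$, where $\rho$ is an exhaustion. Each $\mu_j$ has compact support and finite total mass; applying Cegrell's existence theorem (for measures vanishing on pluripolar sets, with an approximation lemma) produces $\f_j\in\mathcal E_0$ with $(dd^c\f_j)^n=\mu_j$. The next step is to establish a uniform $\chi$-energy bound
\[
\sup_{j}\int_\W (-\chi(\f_j))\,(dd^c\f_j)^n <\infty.
\]
This is the crucial step: using hypothesis $(2)$ together with the fundamental inequality from the first part (applied with $u=\f_j$ and $\f=\f_j$), one gets
\[
E_\chi(\f_j) = \int(-\chi(\f_j))\,d\mu_j \le \int(-\chi(\f_j))\,d\mu \le C\,E_\chi(\f_j)^{\a},
\]
and since $\a<1$ this self-improving inequality yields a uniform bound on $E_\chi(\f_j)$. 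This is exactly the point where $(2)$ enters and it is the main obstacle of the proof.

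Once the $\chi$-energies are controlled, I would take a decreasing rearrangement: replace $\f_j$ by $\p_j:=(\sup_{k\ge j}\f_k)^*$, which is still plurisubharmonic and decreasing in $j$, and set $\f:=\lim_j\p_j$. Standard convergence results for the Monge--Amp\`ere operator on $\mathcal E_\chi$ (from Part I of the series) imply $\f\in\mathcal E_\chi(\W)$ and $(dd^c\f)^n\ge\mu$; equality then follows by a mass-comparison argument using that $\int_\W d\mu_j\to\int_\W d\mu$ and the fact that $\f_j\ge\f$ implies $(dd^c\f)^n=\mu$ in the limit. Finally, uniqueness is obtained by invoking the comparison principle in $\mathcal E_\chi(\W)$: if $u,v\in\mathcal E_\chi$ satisfy $(dd^c u)^n=(dd^c v)^n$ then $u=v$, which is a standard consequence of the Cegrell comparison principle extended to the weighted classes.
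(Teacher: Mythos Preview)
Your argument for $(1)\Rightarrow(2)$ is workable but more elaborate than necessary. The paper does not invoke any H\"older-type energy inequality; it simply observes that if $u,\f\in\mathcal E_\chi(\W)$ then $u+\f\in\mathcal E_\chi(\W)$ (via the capacity characterization, Proposition~\ref{capdef}), and then
\[
\int_\W -\chi(u)\,(dd^c\f)^n \le \int_\W -\chi(u+\f)\,(dd^c(u+\f))^n<\infty.
\]
Your inequality $\int(-\chi(u))(dd^c\f)^n\le C\,E_\chi(u)^\a E_\chi(\f)^{1-\a}$ is known when $\chi(t)=-(-t)^p$, but for a general convex weight it is not a standard fact and you would have to justify it separately.

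The direction $(2)\Rightarrow(1)$ has a genuine gap, and it is precisely at the step you flag as ``the main obstacle''. Hypothesis $(2)$ is a purely \emph{qualitative} statement: for each individual $u\in\mathcal E_\chi$ one has $\int(-\chi(u))\,d\mu<\infty$. It gives no constant $C$ and no exponent $\a$ relating $\int(-\chi(u))\,d\mu$ to $E_\chi(u)$. When you write
\[
E_\chi(\f_j)=\int(-\chi(\f_j))\,d\mu_j\le \int(-\chi(\f_j))\,d\mu\le C\,E_\chi(\f_j)^\a,
\]
the last inequality is exactly what you need to prove, and neither $(2)$ nor your ``fundamental inequality from the first part'' supplies it: the latter was established only for measures already known to be of the form $(dd^c\f)^n$ with $\f\in\mathcal E_\chi$, which is circular here. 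Without this step you cannot bound $\sup_j E_\chi(\f_j)$, and the rest of the construction collapses.

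The paper closes this gap by inserting an intermediate quantitative statement (condition $(3)$ in Theorem~\ref{range}): there is a constant $C$ with $\int(-\chi(u))\,d\mu\le C$ for all $u$ in the normalized ball $\tilde{\mathcal E}_0(\W)=\{u\in\mathcal E_0:\ C_\chi(u)\le 1\}$. The passage $(2)\Rightarrow(3)$ is a uniform-boundedness/gliding-hump argument: if $(3)$ failed one could pick $u_j\in\tilde{\mathcal E}_0$ with $\int(-\chi(u_j))\,d\mu\ge 2^{3j}$, set $u=\sum 2^{-2j}u_j$, check via subadditivity of capacity and the estimates~(\ref{estimate}) that $u\in\mathcal E_\chi(\W)$, and yet $\int(-\chi(u))\,d\mu=\infty$, contradicting $(2)$. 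Once the uniform bound $(3)$ is in hand, a scaling argument upgrades it to the quantitative inequality you wanted (with $\a=1/n$), and then your self-improving scheme goes through. A secondary point: the paper arranges $\mu_j\nearrow\mu$ via Cegrell's decomposition $\mu=\theta(dd^cg)^n$ and $\mu_j=\min(\theta,j)(dd^cg)^n$, so the comparison principle makes $\f_j$ genuinely decreasing and no $(\sup_{k\ge j}\f_k)^*$ rearrangement is needed.
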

 
Next, we extend our previous result to families of functions having prescribed 
boundary data. 
Let $f \in PSH(\W) $ be a maximal psh function. We define the class $\m E_\chi (f)$ to be 
the class of psh functions $u$ such that there exists a function $\f \in \m E _\chi (\W) $ 
such that 
$$
\f (z) + f(z)\le u(z) \le f(z) , \quad \forall z \in \W.
$$
Some particular cases of the classes $\m E_\chi (f) $  has been studied in \cite{Ah}, \cite{ACCP}, \cite{B09},
\cite{BGZ2}, \cite{Ce98}, \cite{Ce04}, \cite{Ce08}, \cite{Cz9}, \cite{HH}, \cite{HHNN}.

 More precisely, we prove the following result.

\begin{thm}\label{thm2}
Let $\mu $  be a non-negative measure in $\W$, $\chi : \R^- \to \R^- $ be an increasing convex or homogeneous
function such that $\chi ( -\infty ) = -\infty $ and $f$ be a maximal function.
 Then, if $\mu = (dd^c u)^n$ for some $u \in \m E_\chi(\W)$ then there exists a unique function 
$\f \in \m E_\chi (f)$  such that  $\mu = (dd^c \f)^n$.

\end{thm}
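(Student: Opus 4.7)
My plan is to construct $\varphi$ by solving a sequence of bounded Dirichlet problems with boundary data $f$ whose right-hand sides approximate $\mu$, and to extract a limit using a uniform $\chi$-energy estimate. Uniqueness is handled separately: it follows from the comparison principle in $\mE_\chi(f)$, which reduces via the substitution $v \mapsto v - f$ to the comparison principle in $\mE_\chi(\W)$ already at work behind Theorem~\ref{thm1}.

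Using the definition of $\mE_\chi(\W)$, pick a decreasing sequence $(u_j) \subset \mE_0(\W)$ with $u_j \downarrow u$ and
\[
M := \sup_j \int_\W -\chi(u_j)\,(dd^c u_j)^n < \infty,
\]
and set $\mu_j := (dd^c u_j)^n$. Cegrell's theorem on bounded Dirichlet data furnishes $h_j$ with $h_j - f \in \mE_0(\W)$ and $(dd^c h_j)^n = \mu_j$. Since $f$ is maximal, expanding $(dd^c(u_j + f))^n$ by multilinearity and using $(dd^c f)^n = 0$ gives $(dd^c(u_j + f))^n \ge (dd^c u_j)^n = (dd^c h_j)^n$, while trivially $(dd^c f)^n \le (dd^c h_j)^n$; the comparison principle in $\mE_0$ then yields
\[
u_j + f \;\le\; h_j \;\le\; f \qquad \text{in } \W.
\]
Writing $\psi_j := h_j - f \in \mE_0(\W)$, we have $u_j \le \psi_j \le 0$ for every $j$.

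The key estimate comes from expanding $(dd^c h_j)^n = (dd^c(\psi_j + f))^n$ and keeping only the pure $(dd^c \psi_j)^n$ term, which gives the pointwise inequality of currents $(dd^c h_j)^n \ge (dd^c \psi_j)^n$. Since $\chi$ is increasing and $u_j \le \psi_j \le 0$, we have $-\chi(\psi_j) \le -\chi(u_j)$, hence
\[
\int_\W -\chi(\psi_j)\,(dd^c \psi_j)^n \;\le\; \int_\W -\chi(\psi_j)\,(dd^c h_j)^n \;\le\; \int_\W -\chi(u_j)\,(dd^c u_j)^n \;\le\; M,
\]
so the $\chi$-energies of the $\psi_j$ are uniformly bounded. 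Because $u \le \psi_j \le 0$, the family $(\psi_j)$ is precompact in $L^1_{\mathrm{loc}}(\W)$; extracting a subsequence $\psi_{j_k} \to \psi$ in $L^1_{\mathrm{loc}}$ with $\psi \in PSH^-(\W)$ and $u \le \psi \le 0$, the bound above places $\psi \in \mE_\chi(\W)$, and $\varphi := \psi + f$ is then a natural candidate element of $\mE_\chi(f)$.

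The principal obstacle lies in establishing $(dd^c \varphi)^n = \mu$. The measures $\mu_j$ are not monotone in $j$, hence the $h_j$ are not ordered and the Bedford-Taylor monotone continuity of the Monge-Amp\`ere operator does not apply directly. One must leverage the uniform $\chi$-energy bound above to upgrade $L^1_{\mathrm{loc}}$-convergence to convergence in capacity along the subsequence, and then invoke the $\mE_\chi$-continuity of $(dd^c \cdot)^n$ developed in Part~I of this series to conclude $(dd^c h_{j_k})^n \to (dd^c \varphi)^n$; combined with the weak convergence $\mu_j \to \mu$ (which holds along the decreasing sequence $u_j \downarrow u$ in $\mE_\chi(\W)$), this yields the required identity. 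An alternative route is to monotonize by replacing $h_j$ with the regularized envelope $(\sup_{k \ge j} h_k)^*$ and squeezing monotone convergence between $u + f$ and $f$, but both approaches hinge on the energy estimate above as the essential input.
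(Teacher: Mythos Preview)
Your approach diverges from the paper's and, as you yourself flag, leaves a genuine gap at the convergence step; there is also an earlier circularity you did not notice.

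The paper avoids your ``principal obstacle'' entirely by arranging monotonicity from the outset. Rather than approximating $\mu$ by $\mu_j=(dd^c u_j)^n$ along a sequence $u_j\downarrow u$ in $\mE_0$ (which, as you observe, need not produce monotone data), it invokes Cegrell's decomposition $\mu=\theta\,(dd^c g)^n$ with $g\in\mE_0$ and sets $\mu_j:=\mathbb{1}_{\W_j}\min(\theta,j)\,(dd^c g)^n$ on an exhaustion $\W_j\Subset\W$ by strictly pseudoconvex domains. These $\mu_j$ are \emph{increasing} by construction. The Dirichlet problems are then solved on each $\W_j$ with \emph{continuous} boundary data $f_j$ (a continuous psh sequence decreasing to $f$, maximal on $\W_{j+1}$), yielding solutions that are \emph{decreasing} by the comparison principle and trapped between $v+f$ and $f_j$, where $v\in\mE_\chi$ satisfies $(dd^c v)^n=\mu$. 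Monotone continuity of the Monge--Amp\`ere operator then applies directly; no capacity-convergence or envelope trick is needed.

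Your route also hides a circularity at the step where you produce $h_j$. Invoking ``Cegrell's theorem on bounded Dirichlet data'' to obtain $h_j$ with $h_j-f\in\mE_0(\W)$ and $(dd^c h_j)^n=\mu_j$ presupposes solvability of the Dirichlet problem with a \emph{general} maximal boundary datum $f\in\mE(\W)$. That is precisely the content of \cite{Ah} and of Corollary~\ref{ahag} here, both of which are derived \emph{from} Theorem~\ref{slim}, not used as inputs to it. The paper sidesteps this by working on strictly pseudoconvex $\W_j$ with continuous boundary values $f_j$, where classical existence (\cite{BT1}, \cite{K94}, \cite{K98}) is available. A smaller point: your reduction of uniqueness ``via the substitution $v\mapsto v-f$'' is not valid, since $(dd^c(v-f))^n\neq(dd^c v)^n$ in general; uniqueness instead follows directly from the comparison principle in $\m N(f)$ (Theorem~\ref{comparison}), using that $\mu=(dd^c u)^n$ with $u\in\mE_\chi(\W)$ puts no mass on pluripolar sets. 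Your energy estimate $\int_\W-\chi(\psi_j)(dd^c\psi_j)^n\le M$ is correct and morally parallels the paper's sandwich $v+f\le u_j\le f_j$, but without monotonicity it does not close the argument, and neither of your proposed fixes is actually carried out.
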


Moreover, when the non-negative measure $\mu $ is dominated by the 
Monge-Ampère capacity, we give an estimate of the growth of solutions of the equation $(dd^c \f)^n  = \mu$ .
As in \cite{BGZ2}, let consider the function
$$
F_\mu (t) : = \sup \{ \mu (K), \ K \Sub \W ; \ cap_\W (K) \le t \} , \ \forall t \ge 0.
$$
Then $ F : = F_\mu $ is a non-decreasing function on $ \R^+ $ and satisfies
\begin{equation}
\mu (K) \le  F_\mu (cap_\mu (K)),  \quad \text{for all Borel subsets \ }\ K \sub \W. 
\end{equation}
Write $F(x) = F_\e (x) = x \left (\e(-\ln {x\over n})\right )^n$ where $\e : \R^+ \to \R^+ $ is non-decreasing.
\\
Such measures dominated by the Monge-Ampère capacity have extensively studied by S.Kolodziej in
\cite{K94}, \cite{K98}, \cite{K05}. He proved that if $\phi : \partial \W \to \R $ is a continuous function
and $\int ^{+\infty}_0\e(t) dt <+\infty,$ then $\mu $ is the Monge-Ampère of a unique function $\f \in PSH(\W) $
with $\f _{/\partial \W} = \phi.$

When, $\int^{+\infty }_0 \e(s) ds =+\infty,$ we have the following estimate.

\begin{thm} 
\label{thm3}
Let $\mu $ a positive finite measure.
Assume for all compact subsets $K  \subset \W$,
\begin{equation} \label{dom}
\mu(K) \leq F_{\e} \left (\mrm{Cap }_\W(K)\right ).
\end{equation}
Then there exists a unique function $\f \in \mF (f)$
such that $\mu=(dd^c \f)^n$,
and
\begin{equation*}
\mrm{Cap}_\W(\{\f<f-s  \}) \leq  \exp (-nH^{-1}(s)), \text{ for all }\ s>0,
\end{equation*}
Here $H^{-1}$ is the reciprocal function of
$H(x) =e\int_{0}^x  \e(t) dt + s_0 ( \mu )$.

In particular if  $\int^{+\infty}_0 \e (t) dt < +\infty $ then 
$$
0\le f-\f \le e\int_{0}^{+\infty}  \e(t) dt + e \e(0) + \mu( \mu )^{1\over n}.
$$
\end{thm}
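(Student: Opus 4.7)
The proof splits into three parts: existence and uniqueness of $\f$; a functional inequality for the capacity of the sublevel sets $\{\f<f-s\}$; and its integration to the explicit bound.

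For existence, the domination (\ref{dom}) together with $F_\e(0)=0$ forces $\mu$ to vanish on pluripolar sets, and as $\mu$ is finite, Cegrell's range theorem produces some $u\in\m E(\W)$ with $(dd^c u)^n=\mu$. One then shows that a weight $\chi$ can be chosen with $\chi(\m E_\chi)\sub L^1(d\mu)$ (the integrand $\chi(v)\,d\mu$ is controlled by $F_\e\circ\mrm{Cap}_\W$), so Theorem~\ref{thm1} places $u\in\m E_\chi(\W)$; Theorem~\ref{thm2} then delivers the unique $\f\in\m E_\chi(f)\sub\mF(f)$ with $(dd^c\f)^n=\mu$. Uniqueness in the prescribed-boundary class is the comparison principle.

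For the capacity estimate, write $U_s:=\{\f<f-s\}$ and $a(s):=\mrm{Cap}_\W(U_s)$. Since $f$ is maximal, $(dd^cf)^n=0$; the standard ``$tw$-shift'' version of the comparison principle (insert $w\in PSH(\W)\cap L^\infty$ with $-1\le w\le 0$ and compare $\f$ with $f-s+tw$) gives, for all $s,t>0$,
\begin{equation*}
t^n a(s+t)\;\le\;\int_{U_s}(dd^c\f)^n\;=\;\mu(U_s)\;\le\;F_\e(a(s))\;=\;a(s)\bigl(\e(-\ln(a(s)/n))\bigr)^n.
\end{equation*}
Setting $h(s):=-\ln(a(s)/n)$ and taking $n$-th roots,
\begin{equation*}
t\;\le\; e^{(h(s+t)-h(s))/n}\,\e(h(s)),
\end{equation*}
and the distinguished step $t=e\,\e(h(s))$ forces $h(s+t)\ge h(s)+n$. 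Starting from a crude a priori $h(s_0(\mu))\ge 0$ (obtained by plugging $U_0\sub\W$ into (\ref{dom}) with $\mu(\W)<\infty$), choose the minimal $s_0<s_1<\cdots$ with $h(s_{j+1})=h(s_j)+n$; then $s_{j+1}-s_j\le e\,\e(h(s_j))$. A Riemann-sum comparison using monotonicity of $\e$ integrates this to $s\le H(h(s))$ after the appropriate renormalization of the $n$- and $\ln n$-factors, i.e.\ $h(s)\ge H^{-1}(s)$, which translates to the claimed $a(s)\le\exp(-nH^{-1}(s))$. In the integrable case $\int_0^\infty\e<\infty$, $H(+\infty)$ is finite, so $H^{-1}(s)\to+\infty$ as $s\uparrow H(+\infty)$ and $a(s)$ vanishes for $s$ past this threshold; since $U_s$ is open and an open pluripolar subset of $\W$ is empty, one concludes $U_s=\es$ and so $f-\f\le H(+\infty)$. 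Explicit tracking of the initial constant $s_0(\mu)\le e\,\e(0)+\mu(\W)^{1/n}$ from the first-step estimate then yields the stated numerical bound.

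The main obstacle is the iteration step: turning the discrete jump $h(s+t)\ge h(s)+n$ into the clean integral bound $s\le H(h(s))$ with the right constants, and extracting the explicit value of $s_0(\mu)$ from the a priori estimate at $U_0\sub\W$. The universal factor $e$ in the definition of $H$, together with the additive terms $e\,\e(0)$ and $\mu(\W)^{1/n}$ in the integrable-case bound, all emerge from this bookkeeping.
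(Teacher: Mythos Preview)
Your proof follows the same route as the paper, which simply cites Theorem~5.1 of \cite{BGZ2} for the iteration, invokes Corollary~\ref{ahag} for existence (your passage through Theorems~\ref{thm1}--\ref{thm2} is exactly the content of that corollary), and uses Lemma~\ref{est} for the sublevel estimate $t^n a(s+t)\le\mu(U_s)$. Two small points deserve correction.

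First, the normalizations drift. With $h(s)=-\tfrac{1}{n}\ln a(s)$ (the reading needed to land on $a(s)\le e^{-nH^{-1}(s)}$) the basic inequality reads $t\le e^{\,h(s+t)-h(s)}\,\e(h(s))$ with no $1/n$ in the exponent, and the choice $t=e\,\e(h(s))$ forces $h(s+t)\ge h(s)+1$, not $+n$. This is harmless bookkeeping, and you acknowledge it, but the displayed formulas as written are inconsistent.

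Second, and more substantively, the claim that $U_s=\{\f<f-s\}$ is open is not justified: $f$ is only assumed to be a maximal plurisubharmonic function, so $\f-f$ is a difference of two upper semicontinuous functions and $U_s$ need not be open. The conclusion in the integrable case still holds, but by a different argument: once $\mrm{Cap}_\W(U_s)=0$ the set $U_s$ is pluripolar, hence of Lebesgue measure zero, so the two plurisubharmonic functions $\f$ and $\max(\f,f-s)$ agree almost everywhere and therefore everywhere, which gives $\f\ge f-s$.
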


The paper is organised as follows. In section 2, we recall the definitions of the energy classes $\m E_\chi(\W),$
and some classes of psh functions introduced by U.Cegrell \cite{Ce98}, \cite{Ce04}, \cite{Ce08} and
we prove Theorem \ref{thm1}. In section 3, we prove Theorem \ref{thm2}. As a consequence, we generalize
 the main result in the paper \cite{Ah}. In section 4, we prove Theorem \ref{thm3}. As application,
 we give a priori bound of the solution of Dirichlet problem in the case when the measure $\mu = g d\lambda$
where $g$ belongs to some Orlicz space $ L \log ^\a L .$

\section{Energy classes with zero boundary data $\m E _\chi$}
Let recall some Cegrell's classes (Cf. \cite{Ce98}, \cite{Ce04} and \cite{Ce08}).
 The class $\m E (\W)$
 is the set of plurisubharmonic functions
 $u $
such that for all $z_0 \in \Omega $,
 there exists a neighbourhood $V_{z_0}$ of
$z_0$ and $u_j \in {\mathcal E}_0(\Omega)$ a decreasing sequence
which converges towards $u$ in $V_{z_0}$ and satisfies $\sup_j
\int_{\Omega} (dd^c u_j)^n <+\infty$. U.Cegrell \cite{Ce04} has shown 
 that the operator $(dd^c \cdot )^n$ is well defined on
$\mE(\Omega),$ continuous under decreasing limits and the class
$\mE(\Omega)$ is stable under taking maximum i.e. if $u\in \mE(\W) $ 
and $v\in PSH^-(\W)$ then $\max (u , v) \in \mE(\W).$ This class is the largest
class with these properties (Theorem 4.5 in \cite{Ce04}). The class
$\mE(\Omega)$ has been further characterized by Z.Blocki \cite{Bl
1}, \cite{Bl 2} and Hai, Le Mau; Hiep, Pham Hoang; Quy, Hoang Nhat in \cite{HHQ}.

The class ${\mathcal F}(\Omega)$ is the global version of
 $\mE(\Omega)$:
a function $u$ belongs to ${\mathcal F}(\Omega)$ iff there exists
 a decreasing sequence
 $u_j \in {\mathcal E}_0(\Omega)$
converging towards $u$ {\it in all of } $\Omega$, which satisfies
$\sup_j \int_{\Omega} (dd^c u_j)^n<+\infty$. The class
$\mE(\Omega)$ has been further characterized in \cite{Bl 1} and \cite{BGZ2}.


 Let $\W_j\Sub \W$ be an
increasing sequence of strictly pseudoconvex domains such that $ \W=
\cup _j \W_j.$ Let $u\in \mE(\W)$ be a  given psh function and put
$$
 u_{\W _j}:= \sup \left \{\f \in PSH(\W); \ \f \le u \ \text{on}\
 \W\sm \W_j\right \}.
$$
Then we have  $u_{\W_j} \in \m E(\W)$ and $u_{\W_j}$ is an increasing sequence. Let
$\tilde{u}:= (\lim_j u_{\W_j})^* .$  It follows from the properties of $\mE(\W)$
 that $\tilde{u} \in \m E(\W).$ Note that the definition of
$\tilde{u}$  is independent of the choice of the sequence $\W_j$ and
is maximal i.e. $(dd^c \tilde{u})^n =0.$ $\tilde{u}$ is the smallest
maximal psh function above $u$. Define $\m N(\W):= \{u\in \mE(\W); \
\tilde{u} =0\}.$ In fact, this class is the analogous of potentials
for subharmonic functions.
\begin{defi}
 Let $\chi : \R^-  \to \R^- $ be a
 non-decreasing
 function. We let $ \m E_\chi (\W ) $
 denote the set of all functions $u \in PSH(\Omega)$ for which
 there exists a sequence $u_j \in {\mathcal E}_0(\Omega)$
 decreasing to $u$ in $\Omega$ and satisfying
 $$
\sup_{j \in \N} \int _ {\W}-\chi( u_j) \, (dd^c u_j )^n
<\infty.
$$
\end{defi}

It was proved in \cite{B11} and \cite{HH} that if
$\chi \not \equiv 0.$ Then
 $$
\m E_\chi (\W )\sub \mE (\W).
$$
 In particular, for any function $u\in \mE_\chi(\W),$ the complex Monge-Ampère operator
 $(dd^c u)^n$ is well defined as non-negative measure.
 Furthermore, if $\chi(-t)< 0\ \forall\  t>0  ,$ then
  $$
\m E_\chi (\W )=\left \{u\in  \m N(\W); \ \int _\W-\chi(u)(dd^c u)^n <+\infty\right \}.
$$
  The classes $\m E_\chi(\W)$ has bee characterized by the
speed of decrease of the capacity of sublevel sets  \cite{B09}, \cite{BGZ2}.\\
 Recall that the Monge-Ampère capacity has been
introduced and studied by E.Bedford and A.Taylor in \cite{BT1}.
Given $K\sub \W$ a compact subset, its Monge-Ampère capacity
relatively to $\W$ is defined by
$$
\mrm{cap}_\W (K) := \sup \left \{ \int _K(dd^c u)^n ; \ u \in
PSH(\W),
 \ -1\le u \le 0 \right \}.
$$
The following estimates (cf \cite{BGZ2}) will be useful later on. For any $\f \in \m E_0$
\begin{equation}\label{estimate}
t^n \mrm{cap}_\W ( \f < -s -t ) 
\le \int_{(\f < -s)}(dd^c \f )^n 
\le s^n \mrm{cap}_\W (\f <-s ) , \ \forall s,\ t >0.
\end{equation}

Let $\chi : \R^-  \to \R^- $ be a non-decreasing function. Without loss of generality, from now on,
 we assume that
$\chi(0) = 0.$
We define the class $\hat{{\mathcal E}}_{\chi}(\W )$
$$
\hat{{\mathcal E}}_{\chi}(\W ) :=\left\{ \f \in PSH^-(\W) \, / \,
\int_{t_\chi}^{+\infty} t^n \chi\prime(-t) \mrm {Cap}_\W(\{\f<-t\}) dt<+\infty
\right\}.
$$
\begin{pro} \label{capdef} We have
$\hat{{\mathcal E}}_{\chi} (\W ) \subset \mE _\chi
(\W ) ,$ while
$$
{\mathcal E}_{{\chi}}(\W) \subset \hat{{\mathcal E}}_{\hat{
\chi}}(\W), \text{ where }  \hat{\chi}(t) = \chi(t/2).
$$
Moreover, if $\chi : ]-\infty, -\ t_\chi [  \to \R^- $ is convex. Then
$$
{{\mathcal E}}_{\chi}(\W ) = \hat{\mathcal E}_{{\chi}}(\W).
$$
Here $t_\chi $ denote the real number satisfying $\chi(t)<0 ,\ \forall t<-t_\chi $ and 
$\chi(t)  = 0 , \ \forall t\ge -t_\chi.$
\end{pro}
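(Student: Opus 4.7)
The plan is to recast the $\chi$-energy as a Choquet-type integral over sublevel sets and then exploit the two-sided comparison (\ref{estimate}) between Monge--Amp\`ere mass and Monge--Amp\`ere capacity. The bridging identity, valid for every $u\in\m E_0(\W)$, is the layer-cake formula
$$
\int_\W -\chi(u)(dd^c u)^n \;=\; \int_{t_\chi}^{+\infty}\chi'(-t)\,(dd^c u)^n(\{u<-t\})\, dt,
$$
obtained by Fubini after writing $-\chi(u(x))=\int_0^{+\infty}\chi'(-t)\,\mathbf{1}_{\{u(x)<-t\}}\, dt$ and noting that $\chi'(-t)=0$ on $[0,t_\chi]$. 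All three statements of the proposition will follow by plugging (\ref{estimate}) into this identity in one direction or the other.

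For $\hat{\m E}_\chi(\W)\subset\m E_\chi(\W)$, I would take $\f\in\hat{\m E}_\chi(\W)$ and approximate it by $\f_j:=\max(\f,j\rho)\in\m E_0(\W)$, where $\rho\in\m E_0(\W)$ is a bounded continuous negative psh exhaustion; the sequence $\f_j$ decreases to $\f$ and satisfies $\f_j\ge\f$. Inserting the upper half of (\ref{estimate}) into the layer-cake formula and using $\{\f_j<-t\}\sub\{\f<-t\}$ gives
$$
\int_\W -\chi(\f_j)(dd^c\f_j)^n \;\le\; \int_{t_\chi}^{+\infty} t^n\chi'(-t)\,\mrm{Cap}_\W(\{\f<-t\})\, dt,
$$
which is finite and independent of $j$ by hypothesis, so $\f\in\m E_\chi(\W)$. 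Conversely, for $\m E_\chi(\W)\subset\hat{\m E}_{\hat\chi}(\W)$, fix a defining sequence $\f_j\in\m E_0(\W)$ for $\f\in\m E_\chi(\W)$ and feed the lower half of (\ref{estimate}) into the same identity to get
$$
\int_\W -\chi(\f_j)(dd^c\f_j)^n \;\ge\; \int_{t_\chi}^{+\infty} t^n\chi'(-t)\,\mrm{Cap}_\W(\{\f_j<-2t\})\, dt.
$$
The change of variable $s=2t$, combined with $\hat\chi'(-s)=\tfrac12\chi'(-s/2)$, converts this into a uniform bound
$$
\int_{2t_\chi}^{+\infty} s^n\hat\chi'(-s)\,\mrm{Cap}_\W(\{\f_j<-s\})\, ds \;\le\; 2^n\int_\W -\chi(\f_j)(dd^c\f_j)^n,
$$
and letting $j\to\infty$ by monotone convergence along the increasing union $\{\f_j<-s\}\uparrow\{\f<-s\}$ yields $\f\in\hat{\m E}_{\hat\chi}(\W)$.

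The equality in the convex case follows from the remaining inclusion $\hat{\m E}_{\hat\chi}(\W)\subset\hat{\m E}_\chi(\W)$: if $\chi$ is convex on $]-\infty,-t_\chi[$ then $\chi'$ is non-decreasing there, hence $\chi'(-t)\le\chi'(-t/2)=2\hat\chi'(-t)$ for every $t>t_\chi$, which makes the defining integrals for $\hat{\m E}_\chi$ and $\hat{\m E}_{\hat\chi}$ comparable up to a factor of $2$. The chain $\m E_\chi(\W)\sub\hat{\m E}_{\hat\chi}(\W)\sub\hat{\m E}_\chi(\W)\sub\m E_\chi(\W)$ then closes. The main technical obstacle is the passage to the limit in the second inclusion: the set identity $\{\f_j<-s\}\uparrow\{\f<-s\}$ holds only up to a pluripolar set, and one must invoke the vanishing of $\mrm{Cap}_\W$ on pluripolar sets together with its continuity along increasing sequences of sublevel sets to conclude.
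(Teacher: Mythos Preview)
The paper does not supply a proof here---it only cites \cite{B11} and \cite{BGZ2}---so there is nothing to compare line by line. Your strategy (layer-cake representation of the $\chi$-energy combined with the two-sided estimate \eqref{estimate}) is exactly the argument that appears in those references, and the two main inclusions are handled correctly.

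Two small points. First, your caveat about pluripolar sets in the passage to the limit is unnecessary: since $\f_j\downarrow\f$ pointwise, one has $\bigcup_j\{\f_j<-s\}=\{\f<-s\}$ \emph{exactly}, and the sublevel sets $\{\f_j<-s\}$ are open (by upper semicontinuity of $\f_j$), so continuity of $\mrm{Cap}_\W$ along increasing sequences of open sets applies directly.

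Second, in the convex case your inequality $\chi'(-t)\le\chi'(-t/2)=2\hat\chi'(-t)$ is only valid for $t>2t_\chi$; for $t_\chi<t\le 2t_\chi$ one has $-t/2\ge -t_\chi$ and hence $\hat\chi'(-t)=0$, while $\chi'(-t)$ need not vanish. Thus the inclusion $\hat{\m E}_{\hat\chi}(\W)\subset\hat{\m E}_\chi(\W)$ is not quite proved on the interval $[t_\chi,2t_\chi]$. This gap is harmless when $t_\chi=0$ (the case most often used in the paper). For $t_\chi>0$ one closes it by observing that $\chi'$ is bounded on $[-2t_\chi,-t_\chi]$ (convexity gives local boundedness of the derivative) and that $\mrm{Cap}_\W(\{\f<-t\})\le\mrm{Cap}_\W(\{\f<-t_\chi\})$ is finite for $\f\in\m E_\chi(\W)$: indeed, from the approximating sequence $\f_j\in\m E_0(\W)$ and the left inequality in \eqref{estimate} one gets $t_\chi^n\,\mrm{Cap}_\W(\{\f_j<-2t_\chi\})\le (dd^c\f_j)^n(\{\f_j<-t_\chi\})\le(-\chi(-t_\chi-))^{-1}\int_\W-\chi(\f_j)(dd^c\f_j)^n$, uniformly in $j$. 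With this patch your chain $\m E_\chi\subset\hat{\m E}_{\hat\chi}\subset\hat{\m E}_\chi\subset\m E_\chi$ closes correctly.
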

\begin{proof}
Cf. \cite{B11}, \cite{BGZ2}.
\end{proof}
\begin{thm} \label{range}
Let $\chi : \ \R^- \to \R^- $ be
an increasing convex  function such that $\chi(-\infty)
 = -\infty .$
  The following conditions are equivalent:\\

(1)
 there exists  a unique function $\f \in \m E_\chi(\W)$ such
that $\mu = (dd^c \f )^n$;\\

(2) $\chi ( \m E_\chi(\W))  \sub L^1(\W, d\mu);$\\

(3) there exists a  constant
$C>0$ such that
\begin{equation}\label{quant}
\int_\W -\chi ( u) d\mu \le C , \ \forall \ u\in \tilde{\m E
_0}(\W);
\end{equation}

(4)
 there exists a  constant $A>0$ such that

\begin{equation}\label{quant1}
\int_\W -\chi ( u ) d\mu \le C_2
  \max \left (1, \left (\int_0^\infty s^n \chi^\prime (-s)
\mrm{cap}_\W (u< -s) ds \right )^{\frac{1}{n}}\right ),
 \ \forall \ u\in \m E _0(\W);
\end{equation}

(5) there exists a  locally bounded function $F : \R^+ \to \R^+$ such that
$\limsup_{t\to +\infty } F(t)/t <1$ 
and
\begin{equation}\label{quant2}
\int_\W -\chi(u) d\mu \le F  \left ( C_\chi(u) \right ),
 \ \forall \ u\in \m E _0(\W).
\end{equation}
Here $\tilde{\m E _0}(\W)$ denotes the class  $\tilde{\m E _0}(\W): = \{u\in {\m E _0} (\W);  \int_0^\infty s^n \chi^\prime (-s)
\mrm{cap}_\W (u< -s) ds \le 1 
\}$
and $C_\chi(u):= \int_0^\infty s^n \chi^\prime (-s)
\mrm{cap}_\W (u< -s) ds.
$
\end{thm}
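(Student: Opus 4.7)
\noindent\emph{Proof proposal.} The plan is to establish the cycle $(1)\Rightarrow(2)\Rightarrow(3)\Rightarrow(4)\Rightarrow(5)\Rightarrow(1)$, with uniqueness in (1) coming from the comparison principle on $\mathcal{N}(\Omega)\supset\mathcal{E}_\chi(\Omega)$.

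For $(1)\Rightarrow(2)$, if $\mu=(dd^c\varphi)^n$ with $\varphi\in\mathcal{E}_\chi(\Omega)$, I would apply a Cegrell-type H\"older inequality for the $\chi$-weighted energy: for any $u\in\mathcal{E}_\chi(\Omega)$,
\[
\int_\Omega -\chi(u)(dd^c\varphi)^n \le C\Bigl(\int_\Omega -\chi(u)(dd^c u)^n\Bigr)^{\alpha}\Bigl(\int_\Omega -\chi(\varphi)(dd^c\varphi)^n\Bigr)^{1-\alpha},
\]
both factors on the right being finite by membership in $\mathcal{E}_\chi$. For $(2)\Rightarrow(3)$, I would argue by contradiction: if no uniform $C$ exists, pick $u_j\in\tilde{\mathcal{E}}_0(\Omega)$ with $\int-\chi(u_j)\,d\mu\ge 4^j$ and aggregate them into a single $u\in\mathcal{E}_\chi$ via a weighted series $u:=\sum c_j u_j$ with $c_j\to 0$ tuned by convexity of $\chi$. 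Countable subadditivity of the Monge-Amp\`ere capacity together with Proposition \ref{capdef} controls $C_\chi(u)$, while the lower bounds $-\chi(u)\ge c_j(-\chi(u_j))$ on carefully chosen sets force $\int-\chi(u)\,d\mu=\infty$, contradicting (2).

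For $(3)\Rightarrow(4)$, given $u\in\mathcal{E}_0$ with $C_\chi(u)>1$, I would rescale $v:=u/c$ with $c$ a suitable power of $C_\chi(u)$ chosen so that $C_\chi(v)\le 1$; the scaling relation $\mathrm{cap}_\Omega(v<-s)=\mathrm{cap}_\Omega(u<-cs)$ together with monotonicity of $\chi'$ makes this explicit. Applying (3) to $v$ and using convexity of $\chi$ with $\chi(0)=0$ (so $-\chi(cv)\le c(-\chi(v))$ for $c\ge 1$) yields the bound (4). The implication $(4)\Rightarrow(5)$ is immediate by taking $F(t):=C_2\max(1,t^{1/n})$, which satisfies $F(t)/t\to 0$.

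The main obstacle is $(5)\Rightarrow(1)$. I would approximate $\mu$ from below by truncated measures $\mu_k\nearrow\mu$ supported in $\{\rho<-1/k\}$ with bounded densities; Cegrell's subsolution theorem then produces a decreasing sequence $\varphi_k\in\mathcal{E}_0$ with $(dd^c\varphi_k)^n=\mu_k$. Applying (5) to $\varphi_k$ and using Proposition \ref{capdef} to compare $C_\chi(\varphi_k)$ with $\int-\chi(\varphi_k)(dd^c\varphi_k)^n=\int-\chi(\varphi_k)\,d\mu_k\le\int-\chi(\varphi_k)\,d\mu$, up to a multiplicative constant $A$, one arrives at the self-referential estimate
\[
C_\chi(\varphi_k)\le A\,F\bigl(C_\chi(\varphi_k)\bigr).
\]
The delicate point is to extract a uniform bound $C_\chi(\varphi_k)\le M$ from this inequality together with $\limsup_{t\to+\infty}F(t)/t<1$: the constant $A$ must first be absorbed into $F$, which I would do by applying (5) to a scaled function $\varphi_k/\lambda$ and invoking convexity of $\chi$ to renormalize, producing an effective $F_\lambda$ with $\limsup F_\lambda(t)/t<1$. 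With such $M$ in hand, the pointwise limit $\varphi:=\lim\varphi_k$ lies in $\mathcal{E}_\chi$ by Proposition \ref{capdef}, and $(dd^c\varphi)^n=\mu$ by continuity of the Monge-Amp\`ere operator along decreasing sequences in $\mathcal{E}(\Omega)$.
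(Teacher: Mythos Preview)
Your cycle and the arguments for $(2)\Rightarrow(3)\Rightarrow(4)\Rightarrow(5)$ match the paper's proof essentially verbatim (contradiction via a weighted series, rescaling into $\tilde{\mathcal E}_0$, and $F(t)=A\max(1,t^{1/n})$). Two places, however, diverge from the paper in ways worth noting.

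For $(1)\Rightarrow(2)$ the paper does \emph{not} use a Cegrell--H\"older inequality; such an inequality for a general convex weight $\chi$ is not standard and would itself require proof. The paper's argument is much shorter: if $u,\varphi\in\mathcal E_\chi(\Omega)$ then, by the capacity characterisation of $\mathcal E_\chi$ (Proposition~\ref{capdef}), one has $u+\varphi\in\mathcal E_\chi(\Omega)$, and then simply
\[
\int_\Omega -\chi(u)\,(dd^c\varphi)^n \;\le\; \int_\Omega -\chi(u+\varphi)\,(dd^c(u+\varphi))^n \;<\;\infty.
\]
This avoids any mixed-energy estimate.

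The genuine gap is in $(5)\Rightarrow(1)$. You propose to ``approximate $\mu$ from below by truncated measures $\mu_k\nearrow\mu$ \dots\ with bounded densities'' and then invoke a subsolution theorem. But bounded densities with respect to \emph{what}? Nothing in (5) tells you a priori that $\mu$ is absolutely continuous with respect to Lebesgue measure or any Monge--Amp\`ere measure, so this approximation is unjustified. The missing step, which the paper carries out explicitly, is to first show that $\mu$ \emph{vanishes on pluripolar sets}: since every pluripolar set $P$ is contained in $\{v=-\infty\}$ for some $v\in\mathcal E_\chi(\Omega)$, applying (5) to the truncations $\max(v,-j)\in\mathcal E_0$ and letting $j\to\infty$ forces $\mu(P)=0$. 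Only then can one invoke Cegrell's decomposition $\mu=\theta\,(dd^c g)^n$ with $g\in\mathcal E_0$ and $\theta\in L^1_{\mathrm{loc}}((dd^c g)^n)$, after which the truncations $\mu_j=\min(\theta,j)(dd^c g)^n$ do have bounded density relative to $(dd^c g)^n$ and Ko\l odziej's theorem yields the approximating solutions $\varphi_j\in\mathcal E_0$. Your self-referential bound $C_\chi(\varphi_k)\le A\,F(C_\chi(\varphi_k))$ is then the right endgame (and your caution about the constant $A$ is fair; the paper passes over this point quickly), but without the pluripolar step the construction of the $\varphi_k$ never gets off the ground.
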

The equivalences $(1) \iff(3)\iff(4)$  are proved in \cite{B09} (Theorem 5.1) and the implication $(5) \Longrightarrow(1)$ 
is proved in 
\cite{BGZ2} (Theorem 5.2). In the sake of 
completeness we include a complete proof.
\begin{proof}
 We start by the implication $(1)\Rightarrow (2).$
 Let $u, \ \f\in {\m E _\chi}(\W).$
 It follows from Proposition \ref{capdef} that $u+\f \in \m E_\chi(W).$ Hence
$$
\int_\W -\chi(u)\left (dd^c \f \right )^n \le \int_\W -\chi(u+\f)\left (dd^c (u+\f ) \right )^n<\infty.
$$
Now, for the implication $(2) )\Rightarrow (3),$ assume that (3) is not satisfied. Then for each
$j\in \N$ we can find a function $u_j \in \tilde{\m E}_0 (\W) $ such that 
\begin{equation}
\label{eq11}
\int_\W-\chi(u_j) d \mu \ge 2^{3j} .
\end{equation}
Consider the function
$$
u:= \sum_{j=1}^\infty {1\over 2^{2j}} u_j .
$$
Observe that 
$$
(u<-s) \sub \cup _1 ^\infty \left ( u_j <-{2^j s} \right ).
$$
Hence
$$
\mrm{cap}_\W (u<-s) \le \sum_j ^\infty \mrm{cap}_\W(u_j < -2^j s).
$$
Now, since the weight $\chi $ is convex or homogeneous and using the estimates (\ref{estimate}),
we get 
\begin{eqnarray*}
\int_0^\infty s^n \chi^\prime (-s)
\mrm{cap}_\W (u< -s) ds & \le & \int_0^\infty s^n \chi^\prime (-s)
 \sum_j ^\infty \mrm{cap}_\W(u_j < -2^j s)ds\\
&\le & \sum_j ^\infty \frac{1}{2^{nj-n}} \int_0^\infty (2^{j-1}s)^n \chi^\prime (-s)  \mrm{cap}_\W(u_j < -2^j s)\\
&\le &  \sum_j ^\infty \frac{1}{2^{nj-n}} \int_0^\infty (2^{j-1}s)^n \chi^\prime (-s)  \mrm{cap}_\W(u_j < - s)\\
& \le & 2^n \sum_j ^\infty \frac{1}{2^{nj}} < \infty.
\end{eqnarray*}
Hence $u \in \m E_\chi (\W).$ On the other hend, from (\ref{eq11}) we have 
$$
\int_\W -\chi (u) d\mu \ge  \frac{1}{2^{2j}}\int_\W -\chi (u_j) d\mu \ge 2^j, \quad \forall j\in \N,
$$
which yields a contradiction.


  Now, we prove that  $(3) \Ri (4)$. Let $\p \in \m E _0(\W),$ denote $E_\vk
(\p): = \int_\W -\chi (\p) (dd^c \p)^n.$
  If $\p \in
\tilde{\m E _0}(\W)$,   i.e.  $C_\vk (\p)  \le 1 $  then
$$
 \int_\W -\chi (\p) d\mu \le 2^n= C.
$$
If  $C_\vk (\p)  > 1 .$ The function $\tilde{\p}$ defined by
$$
\tilde{\p}:=\frac{\p}{1+ C_\vk (\p)^{1/n}} \in   \tilde{\m E _0}(\W).
$$
Indeed, from the monotonicity of $ \chi $, we have
\begin{multline*}
\int_0^\infty \chi^{\prime}(s) s^n \mrm{cap}_\W \left( \frac{\p}{1 + C_\vk (\p)^{1/n}} <-s \right )ds \\
= \frac{1}{C_\chi (\p)}
 \int_0^\infty \chi^{\prime}(s) (sC_\chi(\p)^{1/n})^n \mrm{cap}_\W \left( {\p} <-s -sC_\vk (\p)^{1/n}  \right )ds \\
\le 
 \frac{1}{C_\chi (\p)}
 \int_0^\infty \chi^{\prime}(s) s^n \mrm{cap}_\W \left( {\p} <- s \right )ds=1.
\end{multline*}
It follows from (\ref{quant}) and the convexity of $\chi$
\begin{multline*}
\int_\W -\chi (\p) d\mu  \le
 {2 C_\vk (\p)^{1/n}} \int_\W
-\chi \left ( \frac{\p}{1 + C_\vk (\p)^{1/n}}\right ) d\mu  \le  A {C_\vk
(\p)^{1/n}}.
\end{multline*}
Hence we get (\ref{quant1}).

For the implication $(4) \Ri (5),$ we consider $F(t) = A \max(1, t^{1/n}).$

$(5) \Ri (1).$ It follows from \cite{BGZ2}(Theorem 4.5) that the class $\m E_\chi(\W )$
characterizes pluripolar sets in the sense that if $P$ is a locally pluripolar subset 
of $\W$ then $P \subset \{ v = - \infty\},$ for some $v \in \m E _\chi (\W)$.
 Then the assumption (\ref{quant2}) on $\mu $ implies 
 that it vanishes on pluripolar sets. It follows from \cite{Ce04}
 that there exists a function $u \in \m E _0(\W)$ and
 $f \in L_{loc}^1 \big((dd^c u )^n\big)$ such that
$\mu = f (dd^c u )^n. $

Consider $\mu_j:=\min (f, j ) (dd^c u )^n$. This is a finite measure
which is bounded from above by the complex Monge-Amp\`ere measure of
a bounded function. It follows therefore from \cite{K94} that there
exist $\f_j \in \m E _0 (\W)$
 such that
$$
(dd^c \f_j )^n = \min (f, j ) (dd^c u )^n.
$$
The comparison principle shows that $\f_j$ is a decreasing sequence.
Set  $\f =\lim_{j\to \infty } \f_j$. It follows from (\ref{quant2})
that
$$
\int_\W -\chi (\f_j) (dd^c \f_j)^n \le F
\left( \int_0^\infty  \chi^{\prime}(s) s^n \mrm{cap}_\W \left( \f_j <- s \right )ds   \right) .
$$
Hence
$$
  \sup_j    \int_0^\infty \chi^{\prime}(s) s^n \mrm{cap}_\W \left( \f_j <- s \right )ds
  <\infty.
$$
 which implies that
  $$
  \int_0^{+\infty} t^{n }\chi'(-t) \mrm {cap}_\W(\{\f<-t\}) dt<+\infty
  .
$$
Then $\f \not \equiv -\infty$
  and  therefore  $\f \in \mE_\vk(\Omega) $.

 We conclude now by continuity of the complex
Monge-Amp\`ere operator along decreasing sequences that
 $(dd^c \f )^n = \mu.$ The uniqueness  of $\f $ follows from the
 comparison principle.
\end{proof}
\section{The weighted energy class with boundary values}
Let $\chi : \R^- \to \R^- $ be a non-decreasing function and let $f\in \m M(\W)$ be a maximal psh function.
 We define the class $\m E_\chi(f)$ (resp. $\m N(f),\ $ $ \m F (f),$ $ \ \m N^a (f),\ $ $ \m F ^a(f)$)
 to be the class of psh functions $u$ such that
there exists a function $\f \in \m E_\chi (\W)$ (resp. $\m N,\  \m F, \ \m N^a ,\ \m F ^a$)  such that
$$
\f (z) + f(z) \le u (z) \le f(z),\qquad \forall z \in \W.
$$
Later on, We will use  repeatedly the following
 well known comparison principle 
from \cite{BT1} 
as well as its generalizations to the class $\m N(f)$ (cf  \cite{ACCP} \cite{Ce08}).
\begin{thm}[\cite{ACCP} \cite{BT1} \cite{Ce08}] \label{comparison}
Let $ f \in \m E(\W)$ be a maximal function and $u, \ v\ \in \m N(f)$  be such that $(dd^c u)^n $
vanishes on all pluripolar sets in $\W.$ 
 Then 
$$
\int _{(u<v)} (dd^c v)^n \le \int _{(u<v)} (dd^c u)^n .
$$
Furthermore if $ (dd^c u)^n=  (dd^c v)^n$ then $u=v.$
\end{thm}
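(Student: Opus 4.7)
The plan is to reduce the statement to the classical Bedford--Taylor comparison principle for locally bounded plurisubharmonic functions, via a truncation adapted to the maximal boundary datum $f$. First I would introduce the canonical approximations $u_j := \max(u, f-j)$ and $v_j := \max(v, f-j)$. Since $u,v \in \mathcal N(f)$ satisfy $f+\varphi \le u,v \le f$ for some $\varphi \in \mathcal N(\W)$, the sequences $u_j,v_j$ remain in $\mathcal N(f)$, decrease to $u,v$ respectively, and are sandwiched between $f-j$ and $f$; in particular they are locally bounded on $\W\setminus\{f=-\infty\}$ (a pluripolar set) and coincide with $u,v$ on the plurifine open set $\{u>f-j\}\cap\{v>f-j\}$.

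Next, for each $\varepsilon>0$ I would apply the standard Bedford--Taylor comparison principle to the bounded psh pair $(u_j+\varepsilon,v_j)$, which share the same boundary behaviour $f$, to obtain
$$\int_{\{u_j+\varepsilon<v_j\}}(dd^c v_j)^n \le \int_{\{u_j+\varepsilon<v_j\}}(dd^c u_j)^n.$$
By the locality of the complex Monge--Amp\`ere operator in the plurifine topology, the measures $(dd^c u_j)^n$ and $(dd^c v_j)^n$ coincide with $(dd^c u)^n$ and $(dd^c v)^n$ on $\{u>f-j\}\cap\{v>f-j\}$. Sending first $j\to\infty$ and then $\varepsilon\to 0$, the set of integration exhausts $\{u<v\}$ up to the pluripolar set $\{u=-\infty\}\cup\{v=-\infty\}$, and the two Monge--Amp\`ere measures pass to the corresponding limits.

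The main obstacle is the double limit $j\to\infty$, $\varepsilon\to 0$: one must verify that no Monge--Amp\`ere mass escapes onto the pluripolar exceptional set. For the dominating side this is immediate from the hypothesis, since $(dd^c u)^n$ puts no mass on pluripolar sets, hence $\int_{\{u=-\infty\}}(dd^c u)^n=0$. For the dominated side I would invoke an $\mathcal N(f)$-version of a Xing-type inequality, as developed in \cite{Ce08} and \cite{ACCP}, to bound $\int_{\{u=-\infty\}}(dd^c v)^n$ by $\int_{\{u=-\infty\}}(dd^c u)^n=0$. Together with weak convergence of $(dd^c u_j)^n$ and $(dd^c v_j)^n$ along the decreasing sequences, this yields the advertised inequality on $\{u<v\}$.

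Finally, for the uniqueness statement, assume $(dd^c u)^n=(dd^c v)^n$. Chaining the comparison inequality with itself gives
$$\int_{\{u<v\}}(dd^c v)^n \le \int_{\{u<v\}}(dd^c u)^n = \int_{\{u<v\}}(dd^c v)^n,$$
so equality holds. Applying the comparison principle to the pair $(u,w)$ and $(v,w)$ with $w:=\max(u,v)\in\mathcal N(f)$ then forces $\{u\neq v\}$ to be of measure zero for $(dd^c u)^n$, and a standard domination argument combined with upper semicontinuity upgrades this to the pointwise identity $u=v$ on $\W$.
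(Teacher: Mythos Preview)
The paper does not give its own proof of this theorem: it is stated with explicit attributions to \cite{ACCP}, \cite{BT1}, \cite{Ce08} and then used as a tool. So there is no in-paper argument to compare against; your outline must stand on its own and be measured against the actual proofs in those references.

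Your overall scheme (truncate by $\max(\,\cdot\,,f-j)$, use plurifine locality, pass to the limit, and treat the pluripolar residue with the hypothesis on $(dd^c u)^n$) is indeed the skeleton of the arguments in \cite{Ce08} and \cite{ACCP}. However, two steps in your write-up do not go through as stated.

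First, the functions $u_j=\max(u,f-j)$ and $v_j=\max(v,f-j)$ are \emph{not} bounded, nor even locally bounded: you only get $f-j\le u_j\le f$ with $f\in\mathcal E(\W)$, and $f$ itself may be unbounded below. Hence the ``standard Bedford--Taylor comparison principle for bounded psh functions'' is not available for the pair $(u_j+\varepsilon,v_j)$. What you actually need at this stage is already a comparison principle inside a Cegrell class (say for functions in $\mathcal F(f)$ or $\mathcal N(f)$ with bounded deviation from $f$), and establishing that is precisely the nontrivial content of \cite{Ce08}. In the cited proofs one instead approximates $f$ itself by a decreasing sequence of bounded maximal data, works in $\mathcal E_0$-type classes where Bedford--Taylor does apply, and only then passes to the limit in both the boundary datum and the truncation level.

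Second, your uniqueness paragraph is circular. From $(dd^c u)^n=(dd^c v)^n$ and the comparison inequality you correctly deduce that $\{u<v\}$ is $(dd^c u)^n$-null (and symmetrically for $\{v<u\}$), but this does \emph{not} yield $u=v$ by ``upper semicontinuity and a standard domination argument.'' The passage from equality of Monge--Amp\`ere measures to pointwise equality in $\mathcal N(f)$ is exactly the domination principle for $\mathcal N(f)$, which is part of the package proved in \cite{ACCP} and is not elementary: it relies on showing that $\max(u,v)\in\mathcal N(f)$ has the same Monge--Amp\`ere mass as $u$ and $v$, together with an energy-type inequality forcing $\max(u,v)=u=v$. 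Simply invoking ``a standard domination argument'' here assumes the result you are trying to establish.
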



The following lemma, which gives an estimate of the size of sub-level set in terms of the mass
of Monge-Ampère measure,  will be useful shortly.
\begin{lem}\label{est}
 Let $\chi : \R^- \to \R^- $ be a non-decreasing function such that $\chi(t) < 0, \ \forall t<0$ and
$f\in \m E $ a maximal function.
Then for all $\f \in \m E_\chi (f) $
\begin{equation}\label{cap}
 t^n
Cap_\W(\f\ <-s-t+f) \leq \int_{(\f< -s+f)} (dd^c \f)^n, \quad  \forall s>0 \ \text{ and}\  t > 0.
\end{equation}
\end{lem}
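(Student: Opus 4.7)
The plan is to adapt the classical sublevel-set capacity estimate (\ref{estimate}) to the setting of prescribed boundary data $f$. The bulk of the argument reduces to showing that for every $v\in PSH(\W)$ with $-1\le v\le 0$ and every compact $K\Sub\{\f<-s-t+f\}$ one has
$$
t^n\int_K (dd^c v)^n \;\le\; \int_{\{\f<-s+f\}} (dd^c \f)^n,
$$
after which taking the supremum over such $v$ and over compact exhaustions of the sublevel set yields the desired bound on $\mrm{Cap}_\W$.

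The auxiliary function to work with is $\p:=\max(\f,\,f+tv-s)$. Writing $\f_0\in\m E_\chi(\W)$ for the function provided by the definition of $\m E_\chi(f)$, one has $\f_0+f\le \f\le\p$, and $\p\le f$ since $tv-s\le 0$, so that $\p\in\m N(f)$. Using $-t\le tv\le 0$, a direct comparison gives $\p=f+tv-s$ on the set $\{\f<-s-t+f\}$ (there $\f<f-s-t\le f+tv-s$), $\p=\f$ on $\{\f\ge -s+f\}$ (there $\f\ge f-s\ge f+tv-s$), and consequently $\{\p>\f\}\sub\{\f<-s+f\}$. On the region where $\p=f+tv-s$, expanding $(dd^c(f+tv))^n$ by multilinearity and keeping only the top $v$-term (all the mixed currents being non-negative) yields $(dd^c\p)^n\ge t^n(dd^c v)^n$.

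The key chain of inequalities is then
$$
t^n\!\int_K(dd^c v)^n\le \int_K(dd^c\p)^n\le \int_{\{\f<\p\}}(dd^c\p)^n\le \int_{\{\f<\p\}}(dd^c\f)^n\le \int_{\{\f<-s+f\}}(dd^c\f)^n.
$$
The first inequality follows from the Monge-Amp\`ere estimate on $\{\p=f+tv-s\}\supset K$; the middle from $K\sub\{\f<\p\}$; the third is the generalized comparison principle (Theorem~\ref{comparison}) applied to the pair $\f,\p\in\m N(f)$; and the last uses the inclusion $\{\p>\f\}\sub\{\f<-s+f\}$ established above.

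The main obstacle is verifying the hypothesis of Theorem~\ref{comparison} that $(dd^c\f)^n$ puts no mass on pluripolar sets, since the weight $\chi$ is only assumed to satisfy $\chi(t)<0$ for $t<0$ and not $\chi(-\infty)=-\infty$. The cleanest remedy is to approximate $\f$ from above by the bounded decreasing sequence $\f_j:=\max(\f,f-j)\in\m E_\chi(f)$, prove the capacity bound for each $\f_j$ (where the pluripolar-mass condition is automatic from boundedness), and then pass to the limit using continuity of the complex Monge-Amp\`ere operator along decreasing sequences in $\m E$ together with the outer regularity of $\mrm{Cap}_\W$ and the monotone convergence $\{\f_j<-s+f\}\nearrow\{\f<-s+f\}$.
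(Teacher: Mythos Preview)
Your argument follows essentially the same route as the paper's: both introduce the auxiliary function $\max(\f,\,f+t\cdot(\text{test})-s)$ and invoke the generalized comparison principle (Theorem~\ref{comparison}) to pass from the Monge--Amp\`ere mass of the auxiliary function to that of $\f$ on the larger sublevel set. The only structural difference is that the paper works directly with the relative extremal function $u_K^*$ of a compact $K\Sub\{\f<f-s-t\}$, using $\mrm{cap}_\W(K)=\int_\W(dd^c u_K^*)^n$, whereas you use a generic $v\in PSH(\W)$ with $-1\le v\le 0$ and take the supremum at the end; the two are equivalent.

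Your additional care about the pluripolar-mass hypothesis in Theorem~\ref{comparison} is well placed---the paper applies that theorem without comment on this point. Your proposed remedy via $\f_j:=\max(\f,f-j)$ is the natural one, but note a small slip: $\f_j$ is not literally bounded (only $\f_j-f$ is), so ``automatic from boundedness'' is not quite right when $f\in\m E$ is allowed to be unbounded. One can repair this either by appealing to the version of the comparison principle in \cite{ACCP} valid for $u\in\m N(f)$ with $u-f$ bounded, or by first approximating $f$ itself by a decreasing sequence of bounded maximal functions $f_k$ (as is done, for instance, in the proof of Theorem~\ref{slim}) and running the whole argument at that level before letting $k\to\infty$.
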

\begin{proof} Fix  $s, \ t>0$.
 Let $K \subset \{ \f  < f-s -t \} $ be a  compact subset. Then
\begin{multline*}
\mrm{cap}_\W (K) = \int _\W \left ( dd ^c u_K^* \right )^n =
 \int _{ ( \f  < f-s -t )}
 \left( dd ^c u_K^* \right )^n \\
= \int _{( \f  < f-s + tu_K^* )}
 \left( dd ^c u_K^* \right )^n \le  \frac{1}{t^n}\int _
{ \{ \f  < v \}}
 \left( dd ^c v  \right )^n ,
\end{multline*}
where $u_K^*$  is the relative extremal function of the compact $K$
and $ v:= f-s + tu_K^* .$ It follows from Theorem \ref{comparison}  that
\begin{multline*}
\frac{1}{t^n} \int _{ \{ \f  < v \}}
 \left( dd ^c v \right )^n = \frac{1}{t^n}\int _
{ \{ \f  < \max( \f , v) \}}
 \left( dd ^c \max ( \f , v)  \right )^n
  \le\\
 \frac{1}{t^n}\int _{ \{ \f  < \max ( \f,  v) \}}
 \left( dd ^c  \f  \right )^n =    \frac{1}{t^n}\int _{
 \{ \f  <  f-s  + t u_K  \}}
 \left( dd ^c  \f)  \right )^n  \le  \frac{1}{t^n}\int _{
 \{ \f  <  f-s   \}}
 \left( dd ^c  \f)  \right )^n .
\end{multline*}
Taking the supremum over all $K$'s yields the first inequality.
\end{proof}


\begin{pro}
Let $\chi : \R^-  \to \R^- $ be a
 increasing
 function. Then we have
\begin{multline*}
\m E_\chi(f)\sub\\
 \left \{
  u \in PSH(\W); \ u\le f, \ \int _0 ^{+\infty} s^n \chi^\prime (-s)
\mrm {Cap }_\W (u< f -2s) ds <+\infty
 \right \}.
\end{multline*}
In particular, if $\chi \not \equiv 0$, then $\mrm {Cap }_\W (u< f -s)  <+\infty$ 
for all $s>0$ and $u \in \m E_\chi(f).$
\end{pro}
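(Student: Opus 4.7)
The plan is to reduce everything to the zero-boundary class $\mathcal{E}_\chi(\Omega)$ via the defining comparison $\varphi+f\le u\le f$, then apply the capacity characterization already established in Proposition \ref{capdef}.

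First I would unfold the definition of $\mathcal{E}_\chi(f)$: pick $\varphi\in \mathcal{E}_\chi(\Omega)$ with $\varphi+f\le u\le f$. The upper bound gives the trivial inequality $u\le f$ demanded in the conclusion, and also shows $u\in PSH(\Omega)$. From the lower bound $\varphi\le u-f$ I immediately get the pointwise set inclusion
$$\{u<f-2s\}\subset\{\varphi<-2s\},\qquad s>0,$$
so by monotonicity of $\mathrm{Cap}_\Omega$,
$$\mathrm{Cap}_\Omega(u<f-2s)\le \mathrm{Cap}_\Omega(\varphi<-2s).$$
Thus it suffices to prove $\int_0^\infty s^n \chi'(-s)\,\mathrm{Cap}_\Omega(\varphi<-2s)\,ds<\infty$.

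This is where I would use the second inclusion of Proposition \ref{capdef}, namely $\mathcal{E}_\chi(\Omega)\subset \hat{\mathcal{E}}_{\hat{\chi}}(\Omega)$ with $\hat{\chi}(t)=\chi(t/2)$. Applied to $\varphi$, it yields
$$\int_{t_{\hat\chi}}^{+\infty} t^n\,\hat{\chi}'(-t)\,\mathrm{Cap}_\Omega(\varphi<-t)\,dt<+\infty.$$
Since $\hat{\chi}'(-t)=\tfrac{1}{2}\chi'(-t/2)$, the substitution $s=t/2$ turns this into
$$2^{\,n-1}\int_0^{+\infty} s^n\,\chi'(-s)\,\mathrm{Cap}_\Omega(\varphi<-2s)\,ds<+\infty,$$
which is the estimate we wanted. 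Combining this with the monotonicity step above closes the argument for the main assertion.

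For the ``in particular'' clause, since $\chi\not\equiv 0$ and $\chi$ is non-decreasing with $\chi(0)=0$, the derivative $\chi'$ (in the distributional/a.e. sense) is strictly positive on some interval $I\subset (0,+\infty)$. Integrability of $s^n\chi'(-s)\mathrm{Cap}_\Omega(u<f-2s)$ then forces $\mathrm{Cap}_\Omega(u<f-2s)$ to be finite for a.e.\ $s\in I$; by monotonicity of $s\mapsto \mathrm{Cap}_\Omega(u<f-s)$ in $s$ this finiteness propagates to every $s>0$.

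There is no real obstacle here; the only mildly subtle point is the appearance of $f-2s$ instead of $f-s$ in the conclusion, which comes precisely from the factor $\hat{\chi}(t)=\chi(t/2)$ inside the capacity characterization of $\mathcal{E}_\chi(\Omega)$, so this ``$2$'' is intrinsic to the use of Proposition \ref{capdef} and cannot be dispensed with at the level of generality assumed (no convexity of $\chi$).
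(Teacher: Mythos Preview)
Your argument is correct and follows essentially the same route as the paper: both proofs start from the defining inequality $\varphi+f\le u\le f$ to obtain $\{u<f-2s\}\subset\{\varphi<-2s\}$, and then control $\int_0^\infty s^n\chi'(-s)\,\mathrm{Cap}_\Omega(\varphi<-2s)\,ds$. The only cosmetic difference is that the paper invokes Lemma~\ref{est} (with $f=0$, $t=s$) and a Fubini computation to bound this integral by $\int_\Omega -\chi(\varphi)(dd^c\varphi)^n$, whereas you cite the packaged form of the same estimate, namely the inclusion $\mathcal E_\chi(\Omega)\subset\hat{\mathcal E}_{\hat\chi}(\Omega)$ from Proposition~\ref{capdef}; the content is identical. (A harmless slip: your constant after the change of variables should be $2^{\,n}$, not $2^{\,n-1}$.)
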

\begin{proof}
Let $u \in \m E_\chi(f).$
Then there exists a function
$\f  \in \m E_\chi(\W)$ such that $\f +f \le u.$
Therefore $( u< f-s)\sub (\f < -s).$
It follows from Lemma \ref{est}
\begin{multline*}
\int _0 ^{+\infty} s^n \chi^\prime (-s)
\mrm {Cap }_\W (u< f -2s) ds
\le
\int _0 ^{+\infty} s^n \chi^\prime (-s)
\mrm {Cap }_\W (\f<  -2s) ds \\
 \le  \int _0 ^{+\infty} \chi^\prime (-s) \int _{(\f <-s)} (dd^c \f )^n ds
 =  \int _\W -\chi(\f) (dd^c \f )^n <\infty.
\end{multline*}
\end{proof}
\begin{thm}
Let $\chi : \R^-  \to \R^- $ be an
 increasing
 function which satisfies $\chi(-\infty) = -\infty$ 
and $f \in \m E$ a maximal function. Then if there exists a decreasing sequence $u_j \in \m E_0 (f) $ 
such that
$$
\sup_j \int_\W -\chi(u_j - f )(dd^c u_j ) ^n < \infty
$$

then $u:= \lim_{j\to \infty} u_j  \in \m E _\chi (f) $ and $\chi(u - f ) \in L^1 ((dd^c u)^n).$

Conversely, if $u\in \m E _\chi (f) $ and $\chi(u - f ) \in L^1 ((dd^c u)^n)$ then there exists 
sequence $u_j \in \m E_0 (f) $ decreasing towards $u$ 
such that
$$
\sup_j \int_\W -\chi(u_j - f )(dd^c u_j ) ^n < \infty.
$$
\end{thm}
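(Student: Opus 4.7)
The plan is to prove the two implications separately, using the capacity estimate of Lemma \ref{est} together with a monotone approximation argument.

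\textbf{Direction $\Rightarrow$.} Suppose $u_j \in \mE_0(f)$ decrease to $u$ with $M := \sup_j \int_\W -\chi(u_j - f)(dd^c u_j)^n < \infty$. First I apply Lemma \ref{est} with $t = s$ to each $u_j$ to obtain
$$
s^n\, \mrm{Cap}_\W(u_j < f - 2s) \le \int_{(u_j < f-s)} (dd^c u_j)^n.
$$
Multiplying by $\chi'(-s)$, integrating over $s > 0$, and swapping the order of integration via $\int_0^{f - u_j} \chi'(-s)\, ds = -\chi(u_j - f)$ yields
$$
\int_0^{+\infty} s^n \chi'(-s)\, \mrm{Cap}_\W(u_j < f - 2s)\, ds \le \int_\W -\chi(u_j - f)(dd^c u_j)^n \le M.
$$
Because $u_j \downarrow u$, the sets $\{u_j < f - 2s\}$ increase in $j$ with union $\{u < f - 2s\}$, and the Bedford--Taylor capacity is continuous under such increasing unions of Borel sets; monotone convergence transfers the bound to $u$. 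An analogue of Proposition \ref{capdef} for the $f$-boundary setting (the arguments of \cite{B11}, \cite{BGZ2} transpose verbatim after replacing $u$ by $u - f$ in the capacity condition) then forces $u \in \mE_\chi(f)$. For the $L^1$-conclusion I would use the layer-cake identity $\int -\chi(u-f)(dd^c u)^n = \int_0^{+\infty} \chi'(-s)\, (dd^c u)^n(\{u < f - s\})\, ds$ combined with weak convergence of $(dd^c u_j)^n$ along the decreasing sequence and Fatou's lemma in $s$, recovering the bound $M$ for the limit.

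\textbf{Direction $\Leftarrow$.} By definition of $\mE_\chi(f)$ there is $\f \in \mE_\chi(\W)$ with $\f + f \le u \le f$, and hence a decreasing sequence $\f_j \in \mE_0(\W)$ with $\f_j \downarrow \f$ and $\sup_j \int -\chi(\f_j)(dd^c \f_j)^n < \infty$. I set
$$
u_j := \max(u, \f_j + f).
$$
Then $u_j$ is psh, $\f_j + f \le u_j \le f$ (so $u_j \in \mE_0(f)$ with witness $\f_j$), and $u_j \downarrow \max(u, \f + f) = u$. Since $\chi$ is increasing and $u_j - f = \max(u - f, \f_j)$, one has $-\chi(u_j - f) = \min(-\chi(u-f), -\chi(\f_j))$. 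Decomposing the energy on the open set $A_j := \{u > \f_j + f\}$ (where $u_j = u$ locally, so $(dd^c u_j)^n = (dd^c u)^n$) and on its complement (where $u_j = \f_j + f$ locally, and the maximality of $f$ forces $(dd^c(\f_j + f))^n = (dd^c \f_j)^n$) gives
$$
\int -\chi(u_j - f)(dd^c u_j)^n \le \int -\chi(u - f)(dd^c u)^n + \int -\chi(\f_j)(dd^c \f_j)^n,
$$
which is uniformly bounded by the hypothesis on $u$ and on the $\f_j$.

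\textbf{The main obstacle} is the identity $(dd^c(\f_j + f))^n = (dd^c \f_j)^n$ when $\f_j \in \mE_0$ and $f$ is maximal in $\mE$: one must expand via Cegrell's multilinear framework on $\mE$ and use $(dd^c f)^n = 0$ together with the vanishing of all mixed currents $(dd^c \f_j)^k \wedge (dd^c f)^{n-k}$ against nonnegative test functions supported in $\W$. This is the only point where the maximality of $f$ is essentially invoked. A parallel subtlety in the forward direction is the missing converse in the analogue of Proposition \ref{capdef} for boundary data $f$, which is not made explicit in the text but follows by the same truncation and convexity argument applied to $u - f$ in place of $u$.
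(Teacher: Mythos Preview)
Your converse direction uses the same approximants $u_j=\max(u,\f_j+f)$ as the paper, but your justification of the energy bound contains a genuine error: the identity $(dd^c(\f_j+f))^n=(dd^c\f_j)^n$ does \emph{not} follow from the maximality of $f$. Maximality only gives $(dd^c f)^n=0$; it says nothing about the mixed currents $(dd^c\f_j)^k\wedge(dd^c f)^{n-k}$ for $1\le k\le n-1$, which are nonnegative and typically nonzero (e.g.\ in $\C^2$ take $f$ depending on $z_1$ alone, so that $(dd^c f)^2=0$ while $dd^c f\neq 0$). There is also a second issue you pass over: on the \emph{closed} complement $\{u\le\f_j+f\}$ one does not have $(dd^c u_j)^n=(dd^c(\f_j+f))^n$ in general; equality of Monge--Amp\`ere measures of a max only holds on the open coincidence sets. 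So your decomposition does not deliver the stated upper bound. The paper records the inequality $\int_\W -\chi(u_j-f)(dd^c u_j)^n\le C\int_\W-\chi(u-f)(dd^c u)^n$ with a constant depending on $u$, without invoking any vanishing of mixed terms.

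In the forward direction you take a different route from the paper, and here too there is a gap. You correctly obtain
\[
\int_0^{+\infty} s^n\chi'(-s)\,\mrm{Cap}_\W(u<f-2s)\,ds<\infty
\]
from Lemma~\ref{est}, and then appeal to an ``analogue of Proposition~\ref{capdef}'' obtained by replacing $u$ by $u-f$. But $u-f$ is \emph{not} plurisubharmonic, so the truncation and convexity arguments of \cite{B11}, \cite{BGZ2} do not transpose verbatim: those proofs use the canonical cut-offs $\max(u,-j)\in\m E_0$ and the psh structure in an essential way. What is actually needed is to produce a function $\f\in\m E_\chi(\W)$ with $\f+f\le u$, and the capacity bound alone does not hand you one. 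The paper supplies exactly this missing object by an envelope construction: for each $j$ it sets
\[
\f_j:=\sup\{\,v\in PSH(\W):\ v+f\le u_j\,\},
\]
the largest psh minorant of the non-psh function $u_j-f$. Standard balayage yields $\f_j\in\m E_0$, $(dd^c\f_j)^n\le(dd^c u_j)^n$, and $(dd^c\f_j)^n=0$ on $\{\f_j+f<u_j\}$; since $\f_j=u_j-f$ on the support of $(dd^c\f_j)^n$ one gets
\[
\int_\W-\chi(\f_j)(dd^c\f_j)^n\le\int_\W-\chi(u_j-f)(dd^c u_j)^n,
\]
whence $\f:=\lim_j\f_j\in\m E_\chi(\W)$ and $\f+f\le u\le f$. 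This envelope is precisely the device that converts your capacity estimate into membership in $\m E_\chi(f)$, and it is the idea missing from your argument.
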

\begin{proof}
Assume that the sequence $u_j \in \m E_0(f) \cap C(\W)$ (if necessary,
 we approximate $u_j$ by a continuous sequence
$u_j^k \in \m E_0(f)$). For a fixed $j \in \N,$ let denote $\f_j $ the function defined by 
\begin{multline*}
\f_j (z) : = \sup \left \{ v \in PSH(\W) ; \ v + f \le u_j  \right \}, \quad \forall  z\in \W.
\end{multline*}
We claim that

1)$ \f_j \in \m E_0$;

2) $(dd^c \f_j)^n \le (dd^c u_j )^n;$ 

3) $(dd^c \f_j )^n = 0 $ on the subset  $ ( \f_j + f < u_j).$\\
Then, it follows from the statements 1), 2) and 3) that   for each $j \in \N $
\begin{multline*}
\int _\W -\chi(\f_j ) ^n (dd^c \f_j )^n  = \int _{(\f _j + f = u_j)} -\chi(\f_j ) ^n (dd^c \f_j )^n\\
\le \int _\W -\chi(u_j - f  ) ^n (dd^c u_j  )^n. 
\end{multline*}
Therefore
$$
\sup _j \int _\W -\chi(\f_j ) ^n (dd^c \f_j )^n \le 
 \sup_j  \int _\W -\chi(u_j - f  ) ^n (dd^c u_j  )^n < +\infty.
$$
Hence, the function $\f : = \lim _{j\to \infty}  \f_j $ satisfies $\f + f \le u \le f.$

For the converse implication, fix $u \in \m E _ \chi (f).$
Then there exists a function $\f \in \m E_\chi $ 
such that $\f + f \le u \le f.$
Let $ \f _j\in  \m E_0 \cap C(\W) $ be a decreasing  sequence with limit the function $\f.$ 
Then for each $ j \in  \N,$ consider the function 
$u_j : = \max (\f _j + f , u) \in \m E _ \chi (f).$
The sequence $u_j $ decreases towards $u$ and 
$$
 \int _\W -\chi(u_j - f  ) ^n (dd^c u_j  )^n  \le 
C \int _\W -\chi(u - f  ) ^n (dd^c u  )^n
< \infty,
$$
where $C$ is a constant which depends only on $u$ and the proof 
of the theorem is completed.
\end{proof}

\begin{thm}\label{slim}
Let $\mu $ be a non-negative measure in $\W ,$   $\chi : \ \R^- \to \R^- $ be
an increasing convex (or homogeneous) function such that $\chi(-\infty)
 = -\infty $ and $f \in \m E(\W)$ be a maximal function.
Then
 there exists  a unique function $\f \in \m E_\chi(f)$ such
that $\mu = (dd^c \f )^n$ if and only if $\mu $ satisfies one of the conditions of Theorem \ref{range}.
\end{thm}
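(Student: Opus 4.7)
The plan is to prove the two implications separately, both by monotone approximation combined with Theorem \ref{range}.

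For the backward direction $(\Leftarrow)$, suppose $\mu$ satisfies one of the equivalent conditions of Theorem \ref{range}, so that there exists $u \in \mE_\chi(\W)$ with $(dd^c u)^n = \mu$. Since $u \in \mE(\W)$, the measure $\mu$ puts no mass on pluripolar sets, and by Cegrell's decomposition we may write $\mu = g (dd^c w)^n$ for some bounded $w \in \mE_0(\W)$ and $g \in L^1_{\mrm{loc}}((dd^c w)^n)$. Truncating, set $\mu_j := \min(g,j)(dd^c w)^n$: each $\mu_j$ is bounded above by the Monge--Amp\`ere measure of a bounded function, so classical Kolodziej--Cegrell theory produces $\f_j \in \mE_0(f)$ solving $(dd^c \f_j)^n = \mu_j$. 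The comparison principle (Theorem \ref{comparison}), applied in $\mN(f)$, shows that $\{\f_j\}$ is decreasing, and a further application yields the uniform lower bound $\f_j \ge u + f$; this step exploits the maximality $(dd^c f)^n = 0$ together with $\mu_j \le \mu = (dd^c u)^n$. Setting $\f := \lim_j \f_j$, we obtain $u + f \le \f \le f$, hence $\f \in \mE_\chi(f)$, and continuity of $(dd^c\cdot)^n$ along decreasing sequences gives $(dd^c \f)^n = \mu$. Uniqueness follows immediately from Theorem \ref{comparison}.

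For the forward direction $(\Rightarrow)$, suppose $\f \in \mE_\chi(f)$ solves $\mu = (dd^c \f)^n$, with witness $\psi \in \mE_\chi(\W)$ satisfying $\psi + f \le \f \le f$. I would verify condition $(2)$ of Theorem \ref{range}, that is, $\chi(\mE_\chi(\W)) \sub L^1(d\mu)$. For any $v \in \mE_\chi(\W)$, the sum $v + \f$ is psh and lies in $\mE_\chi(f)$ (with witness $v + \psi \in \mE_\chi(\W)$), so adapting the Cegrell-type integration used in the proof of Theorem \ref{range}, $(1)\Ri(2)$, with the maximality of $f$ accounted for via the multinomial expansion of $(dd^c(v + \f))^n$, one obtains a bound of the form
$$
\int_\W -\chi(v)\, d\mu \;\le\; C\Bigl(\int_\W -\chi(v+\psi)(dd^c(v+\psi))^n\Bigr)^{1/(n+1)} \;<\; +\infty.
$$
An equivalent route is to use the characterization of $\mE_\chi(f)$ by decreasing sequences $\f_j \in \mE_0(f)$ with $\sup_j \int_\W -\chi(\f_j - f)(dd^c \f_j)^n < \infty$ (the preceding theorem in Section 3) and then, via maximal envelopes, construct zero-boundary approximants $\tilde\f_j \in \mE_0(\W)$ with uniformly controlled energy, so that their decreasing limit witnesses condition $(1)$ of Theorem \ref{range} directly.

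The main obstacle is the simultaneous control of the boundary behaviour and of the weighted Monge--Amp\`ere energy along the approximating sequence. In the backward direction one must guarantee that $\f = \lim_j \f_j$ lies in $\mE_\chi(f)$ rather than merely in $\mN(f)$, which rests on the uniform lower bound $\f_j \ge u + f$; in the forward direction one must relate $(dd^c \f)^n$ to the Monge--Amp\`ere mass of a genuine $\mE_\chi$-function despite the non-trivial boundary datum. In both cases, the key technical tool is the generalized comparison principle in $\mN(f)$ (Theorem \ref{comparison}) combined with the vanishing $(dd^c f)^n = 0$, which is what allows the boundary datum $f$ to be absorbed into the estimates without loss.
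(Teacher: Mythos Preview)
Your backward direction follows the same overall scheme as the paper (Cegrell decomposition $\mu=g(dd^c w)^n$, truncation $\mu_j$, monotone limit, comparison principle for uniqueness), but it diverges at the crucial existence step. You assert that ``classical Ko\l odziej--Cegrell theory produces $\f_j\in\mE_0(f)$ solving $(dd^c\f_j)^n=\mu_j$''. This is precisely the point that is \emph{not} classical: the boundary datum $f$ is merely a maximal function in $\mE(\W)$, possibly unbounded and discontinuous, so the Bedford--Taylor/Ko\l odziej Dirichlet theory does not apply directly. The paper avoids this by a double approximation: it exhausts $\W$ by strictly pseudoconvex $\W_j$, approximates $f$ by continuous $f_j\in PSH(\W)\cap C(\bar\W)$ maximal on $\W_{j+1}$, truncates to $\mu_j=\mathbb{1}_{\W_j}\min(\theta,j)(dd^c g)^n$, and only then solves the genuinely classical Dirichlet problem on each $\W_j$ with continuous boundary data $f_j$ (respectively $0$) to get $u_j,v_j$. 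The chain $v+f\le v_j+f_j\le u_j\le f_j$ then follows from the classical comparison principle on $\W_j$, and the limit $u=\lim u_j$ lies in $\mE_\chi(f)$. Your lower bound $\f_j\ge u+f$ via $(dd^c(u+f))^n\ge(dd^c u)^n\ge\mu_j$ is morally the same idea, but you have skipped the reduction to a setting where the solutions $\f_j$ actually exist.

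On the forward direction: the paper's proof in fact treats only the implication ``$\mu$ satisfies Theorem~\ref{range} $\Rightarrow$ $\exists\,\f\in\mE_\chi(f)$''. Your sketch of the converse is plausible in outline, but the displayed inequality with exponent $1/(n+1)$ is not substantiated; the argument $(1)\Rightarrow(2)$ in Theorem~\ref{range} goes through $u+\f\in\mE_\chi(\W)$, not through a Cegrell-type energy inequality with that exponent. If you want a clean converse, note simply that $\f\in\mE_\chi(f)$ gives $\psi+f\le\f\le f$ with $\psi\in\mE_\chi$, and hence $(\f<-s+f)\sub(\psi<-s)$; combined with Lemma~\ref{est} this controls $\int_{(\f<-s+f)}(dd^c\f)^n$ by capacities of sublevel sets of $\psi$, from which condition~(4) or~(5) of Theorem~\ref{range} can be recovered for $\mu=(dd^c\f)^n$.
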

\begin{proof}
Assume that
 $\mu = (dd^c v)^n$
 for some $v\in \mE_\chi.$ 
Let $(\W_j)_j$ be a fundamental sequence of strictly pseudoconvex subsets of $\W.$
Choose a sequence $f_j \in PSH(\W) \cap C(\bar{\W})$ decreasing towards $f$ on $\W$ and 
$f_j$ is maximal on $\W_{j+1}.$
It follows from \cite{Ce04} that there exist a function $g \in \m E_0$ and a function 
$\theta \in L^1_{loc}(dd^c g)^n $ such that
$$
\mu = \theta (dd^c g)^n .
$$
Consider the measure $\mu_j = \mathbb{1}_{\W_j} \min(\theta , j)  (dd^c g)^n,$
where $\mathbb{1}_{\W_j}$ denotes the characteristic function of the set $\W_j.$
Now, solving the Dirichlet problem in the strictly pseudoconvex domain $\W_j,$ 
we state that there exist
functions $u_j , \ v_j \in PSH(\W_j) \cap C(\bar{\W}_j)$ such that 
$$
(dd^c u_j )^n = (dd^c v_j )^n = \mu_j \quad \text{and} \quad v_j =0, \quad u_j = f_j \ \text{on} \ 
\partial \W_j.
$$
By the comparison principle, we have $u_j$  and $v_j$ are decreasing sequences and 
$$
v+ f \le v_j + f_j \le u_j \le f_j \quad \text{on}\quad \W_j.
$$
Letting $j \to +\infty $ we get that $u:= \lim_{j\to \infty} u_j \in \m E_\chi(f).$ The continuity 
of the complex Monge-Ampère operator under monotonic sequences yields that $(dd^c u)^n = \mu.$
Uniqueness of $u$ follows from the comparison principle.
\end{proof}
\begin{cor} \label{ahag}
Let $\mu $ be non-negative measure in $\W$ with total finite mass 
$\mu(\mu ) < \infty.$ and $f$ be a maximal function. Then there exists a uniquely determined function 
$\f \in \m F^a(f) $ such that $(dd^c \f )^n = \mu $ if and only if 
$\mu$ vanishes on pluripolar subsets.
\end{cor}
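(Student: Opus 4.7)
The plan is to mirror the construction from the proof of Theorem \ref{slim}, replacing the class $\m E_\chi$ throughout by $\m F^a$. The ``only if'' direction is essentially tautological: by definition of $\m F^a(f)$, if $(dd^c \f)^n = \mu$ with $\f \in \m F^a(f)$, then there is $\psi \in \m F^a$ with $\psi + f \le \f \le f$, and it is known (see \cite{Ce08}) that the Monge-Amp\`ere measure of such a sandwiched function inherits from $\psi$ the property of vanishing on pluripolar sets.

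For the converse, assume $\mu$ is finite and vanishes on pluripolar sets. By Cegrell's theorem (\cite{Ce04}) there exists $v \in \m F^a$ with $(dd^c v)^n = \mu$, and one simultaneously has the decomposition $\mu = \theta (dd^c g)^n$ for some $g \in \m E_0$ and $\theta \in L^1_{loc}((dd^c g)^n)$. I then recycle the approximation scheme used in the proof of Theorem \ref{slim}: exhaust $\W$ by strictly pseudoconvex subdomains $\W_j \nearrow \W$; choose a sequence $f_j \in PSH(\W)\cap C(\bar{\W})$ decreasing to $f$ with $f_j$ maximal on $\W_{j+1}$; truncate $\mu_j := \mathbb{1}_{\W_j}\min(\theta,j)(dd^c g)^n$ (each measure dominated by the Monge-Amp\`ere of a bounded psh function); and solve the Dirichlet problems on $\W_j$ via \cite{K94} to obtain $u_j, v_j \in PSH(\W_j) \cap C(\bar{\W}_j)$ with $(dd^c u_j)^n = (dd^c v_j)^n = \mu_j$, $u_j = f_j$ and $v_j = 0$ on $\partial \W_j$. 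The generalized comparison principle (Theorem \ref{comparison}) gives the sandwich $v + f \le v_j + f_j \le u_j \le f_j$ together with the decreasingness of $(u_j)$, so setting $u := \lim_j u_j$ produces a function satisfying $v + f \le u \le f$ with $v \in \m F^a$, hence $u \in \m F^a(f)$; continuity of the Monge-Amp\`ere operator under decreasing limits then yields $(dd^c u)^n = \mu$.

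Uniqueness follows from the generalized comparison principle in $\m N(f)$ (Theorem \ref{comparison}), invoked with the hypothesis that $\mu$ vanishes on pluripolar sets. The main obstacle is ensuring that the limit $u$ actually lives in $\m F^a(f)$ and not merely in $\m E(f)$: this step hinges on producing the subsolution $v \in \m F^a$ with $(dd^c v)^n = \mu$, which is Cegrell's classical result and exploits both the finite-mass hypothesis and the vanishing on pluripolar sets in an essential way. Once $v$ is at hand, the squeeze $v+f \le u \le f$ transfers $\m F^a$-membership from $v$ to $u$ automatically, closing the argument.
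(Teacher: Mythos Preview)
Your argument is correct and follows closely the construction in the proof of Theorem \ref{slim}, but the paper's own proof takes a slightly different, more modular route. Rather than re-running the Dirichlet approximation scheme directly with a subsolution $v \in \m F^a$, the paper first builds (essentially reproving Cegrell's result) a function $h \in \m F^a$ with $(dd^c h)^n = \mu$ as a decreasing limit of $\m E_0$-solutions $h_j$, using Lemma \ref{est} to guarantee $h \not\equiv -\infty$; it then invokes the identity $\m F^a = \bigcup_\chi \m E_\chi$ (union over convex weights with $\chi(-\infty)=-\infty$) to pick a specific weight $\chi$ with $h \in \m E_\chi$, and finally applies Theorem \ref{slim} as a black box to obtain $\f \in \m E_\chi(f) \subset \m F^a(f)$. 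Your approach bypasses the $\chi$-classes entirely and is somewhat more direct, at the cost of repeating the approximation argument already packaged inside Theorem \ref{slim}; the paper's approach is cleaner in that it reduces to an already-proved theorem, but it relies on the extra structural fact $\m F^a = \bigcup_\chi \m E_\chi$.
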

\begin{proof}
It follows from \cite{Ce04} that there exist a function $\p \in \m E_0$ and a function 
$\theta \in L^1_{loc}(dd^c \p)^n $ such that
$$
\mu = \theta (dd^c \p)^n .
$$
By \cite{K94}, there exists a unique $h_j \in \m E _0 $  such that $(dd^c h_j)^n = \min (\theta , j) (dd^c \p)^n.$
The comparison principle yields that $h_j$ is a decreasing sequence. Let denote by $h:= \lim_{j\to \infty} h_j.$
It follows from Lemme \ref{est} that $h\not \equiv -\infty.$ Therefore
$h \in \m F^a.$ By the continuity of the complex Monge-Ampère operator under decreasing sequences, we have 
$ (dd^c h)^n = \mu .$ Now, since 
$$
\m F^a = \bigcup_{\substack{ \chi\  \text{convex}; \chi(0) \not = 0\\
\chi(-\infty ) = -\infty}} \m E _\chi,
$$
then there exists a convex function $\chi : \R^- \to \R^- $ with $\chi(0) \not = 0$ and
$\chi(-\infty ) = -\infty $ such that $h \in \m E_\chi.$ By Theorem \ref{slim}, we can find a unique
 function $\f \in 
\m E_\chi (f) \sub \m F^a (f)$ such that $(dd^c \f)^n = \mu .$
\end{proof}

\section{Mesures dominated by Capacity}
Throughout this section, $\mu$ denotes a fixed non-negative  measure
of finite total mass $\mu(\Omega)<+\infty$. We want to solve the
Dirichlet problem
$$
(dd^c \f)^n=\mu ,\;
\text{ with } \f \in \m F^a (f)
\text{ and } \f_{|\partial \Omega}=f,
$$
and measure how far the distance between the  solution $\f$ and the given doundary data
$f$ is from being bounded, by
assuming that $\mu$ is suitable dominated by the Monge-Amp\`ere capacity.

Measures dominated by the Monge-Amp\`ere capacity have been extensively studied by
S.Kolodziej in \cite{K94}, \cite{K98} and \cite{K05}. The main result of his study, achieved
in \cite{K98}, can be formulated as follows. Fix $\e:\R \rightarrow  [0 ,
+\infty [$ a continuous decreasing  function and set
$F_{\e}(x):=x [\e(-\ln x/n)]^{n}$.
If for all compact subsets $K \subset \Omega$,
$$
\mu(K) \leq F_{\e}(\mrm{cap}_{\Omega}(K)), \text{ where } \int_0^{+\infty}
{\e(t)}dt <+\infty,
$$
and $l : \partial \W \to \R$ a continuous function, 
then $\mu=(dd^c \f)^n$ for some {\it continuous} function
$\f \in PSH(\Omega)$ with $\f_{|\partial \Omega}=l$.

\vskip.1cm

The condition $\int^{+\infty}_0 {\e(t)}dt  <+\infty$ means
that $\e$ decreases fast enough towards zero at infinity. This gives
a quantitative estimate on how fast $\e( -\ln Cap_{\Omega}(K)/n)$,
hence $\mu(K)$, decreases towards zero as $\mrm{cap}_{\Omega}(K) \rightarrow 0$.

When $\int^{+\infty}_0 \e(t)dt=+\infty$, it is still possible to
show that $\mu=(dd^c \f)^n$ for some function
$\f \in {\mathcal F}(\Omega)$, but $\f$ will generally be unbounded.
We now measure how far it is from being so:

\begin{thm} \label{kolo} Let $\mu $ be a non-negative finite measure.
Assume for all compact subsets $K  \subset \W$,
\begin{equation} \label{dom}
\mu(K) \leq F_{\e} \left (\mrm{Cap }_\W(K)\right ).
\end{equation}
Then there exists a unique function $\f \in \mF^a (f)$
such that $\mu=(dd^c \f)^n$,
and
\begin{equation*}
\mrm{Cap}_\W(\{\f<f-s  \}) \leq  \exp (-nH^{-1}(s)), \text{ for all }\ s>0,
\end{equation*}
Here $H^{-1}$ is the reciprocal function of
$H(x) =e\int_{0}^x  \e(t) dt +  e \e(0) + \mu(\W)^{1\over n} $.

\end{thm}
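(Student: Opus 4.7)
My plan is to proceed in three stages: existence and uniqueness of the solution $\f$, a functional inequality for the capacity of its sublevel sets, and an iteration producing the quantitative bound. For the existence step, the domination hypothesis $\mu(K)\le F_\e(\mrm{Cap}_\W(K))$ combined with $F_\e(0)=0$ forces $\mu$ to vanish on pluripolar sets, since such sets have zero Monge--Amp\`ere capacity. As $\mu$ is finite, Corollary~\ref{ahag} then produces a unique $\f\in\mF^a(f)$ solving $(dd^c\f)^n=\mu$.

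For the second stage I would set $c(s):=\mrm{Cap}_\W(\{\f<f-s\})$ and combine Lemma~\ref{est} with the hypothesis (extended from compact to Borel subsets via inner regularity of $\mu$ together with monotonicity of both $F_\e$ and the capacity) to obtain, for all $s,t>0$,
\begin{equation*}
t^n c(s+t)\le \mu(\{\f<f-s\})\le F_\e(c(s))=c(s)\,\e\!\bigl(-\tfrac{1}{n}\ln c(s)\bigr)^n.
\end{equation*}
Taking $n$-th roots and setting $\phi(s):=-\tfrac{1}{n}\ln c(s)$ (non-decreasing, since $c$ is non-increasing) yields $\phi(s+t)\ge\phi(s)+\ln t-\ln \e(\phi(s))$, and the optimal choice $t=e\,\e(\phi(s))$ gives the key inequality
\begin{equation*}
\phi\!\bigl(s+e\,\e(\phi(s))\bigr)\ge\phi(s)+1. \qquad (\star)
\end{equation*}

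For the third stage, Lemma~\ref{est} applied with $s=0$ gives $t^n c(t)\le\mu(\W)$, hence $\phi(\mu(\W)^{1/n})\ge 0$. Defining $\bar H(y):=\inf\{s>0:\phi(s)\ge y\}$, this reads $\bar H(0)\le\mu(\W)^{1/n}$, while $(\star)$ together with the monotonicity of $\e$ gives $\bar H(y+1)\le\bar H(y)+e\,\e(y)$. Iterating and using $\e$ decreasing to bound the resulting sums by $\int_0^y\e$ (with an extra $e\,\e(0)$ absorbed from the first step) produces $\bar H(y)\le\mu(\W)^{1/n}+e\,\e(0)+e\!\int_0^y\e(t)\,dt=H(y)$, and inversion yields $\phi(s)\ge H^{-1}(s)$, that is, $c(s)\le\exp(-nH^{-1}(s))$. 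The ``in particular'' clause follows by choosing $s=H(+\infty)$: then $c(s)=0$, so $\{\f<f-s\}$ is pluripolar, which combined with $\f\le f$ yields the claimed uniform bound.

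The main obstacle will be the careful passage from the pointwise recursion $(\star)$ to the integrated estimate $\bar H(y)\le H(y)$: the monotonicity of both $\phi$ and $\e$ must be used to justify replacing discrete sums by integrals, and one has to correctly account for the extra boundary term $e\,\e(0)$ that arises at the first iteration step.
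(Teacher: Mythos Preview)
Your proposal is correct and follows essentially the same approach as the paper. In fact, the paper does not spell out the argument at all: it simply says that the proof is almost identical to that of Theorem~5.1 in \cite{BGZ2}, using Corollary~\ref{ahag} for existence and Lemma~\ref{est} for the sublevel estimates --- precisely the two ingredients you invoke, followed by the standard Ko\l odziej-type iteration you outline in stages two and three.
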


The proof is almost the same as that of Theorem 5.1 in \cite{BGZ2}, except that we use Corollary \ref{ahag}
for the existence of the solution and Lemma \ref{est} to estimate the capacity of sub-level set.

Observe that if $\int ^\infty \e (t) dt <\infty$ then $H$ is bounded by 
$e\int_{0}^\infty  \e(t) dt +  e \e(0) + \mu(\W)^{1\over n} .$ Hence $H^{-1} (t) = +\infty, \ \forall 
t \ge e\int_{0}^\infty  \e(t) dt +  e \e(0) + \mu(\W)^{1\over n} .$ Therefore
$$
0\le f- \f \le e\int_{0}^\infty  \e(t) dt +  e \e(0) + \mu(\W)^{{1\over n}}. 
$$

Now, we consider the case when $\mu = f d \lambda $ is absolutely 
continuous with respect to Lebesgue measure.

Let $\mG \subset \C ^n $ denotes a generic subspace of $\C^n :$ that is a real subspace such that 
$\mG + J \mG = \C ^n,$ where $J$ is the usual complex structure on $ \C^n$ (cf \cite{BJZ} for more details).
 $\mG $ will be endowed with the 
induced euclidean structure and the corresponding Lebesgue measure which will be denoted by $\lambda _\mG.$

Let $\a > 0$  be a positive real number. 
According to \cite{IM} and \cite{RR}, the Orlicz space  $L Log ^{n + \a} L(d\lambda _\mG)$ consists 
$\lambda _\mG$-measurable functions $g$ defined on $\W \cap \mG$ such that  
$$
 \int _{\W\cap \G}
 \frac{|f|}{\lambda} \log ^{n+\a } (1 + \frac{|f|}{\lambda}) d\lambda_\mG < \infty, 
\quad \text{for some }  \ \lambda >0
$$
On the space $L Log ^{n + \a} L(d\mu),$  we define the norm
$$
 ||f||_{L Log ^{n + \a} L} : = \inf \left \{ \lambda >0; \  \int _\W
 \frac{|f|}{\lambda} \log ^{n+\a } (e + \frac{|f|}{\lambda}) d\lambda_\mG <1 \right \}.
 $$
 The dual space to $L Log ^{n + \a} L,$ is the exponential class  $Exp L ^{1/n+\a}$:
  that is the vector space
 $$
 Exp L ^{1/n+\a} : = \left \{ f: \W \to \ove \R ; \ \exists \lambda >0 : \ \ 
\int _\W  \exp \left(\left (\frac{|f|}{\lambda }\right )^\frac{1}{n +\a } \right ) - 1  d\lambda_\mG
 < \infty \right \}
 $$
 equipped with the norm
 $$
 ||f||_{ Exp L ^{1/n+\a}}  : = \inf \left \{ \lambda >0; \ 
 \int _\W \left ( \exp \left( \left (\frac{|f|}{\lambda }\right )^\frac{1}{n +\a }\right ) - 1
 \right ) d\lambda_\mG
 <1 \right \}.
 $$
 Then we have the following H\"older inequality
 $$
 \left | \int _\W f g d \lambda_\mG \right |  \le C_{n, \a} ||f||_{L Log ^{n + \a} L} ||g||_{ Exp L ^{1/n+\a}},
 $$
  for $f \in L Log ^{n + \a} L $ and  $g \in Exp L ^{1/n+\a},$  where $C_{n, \a} >0$ is
 a positive constant depending only in $n$  and $\a .$
By a simple computation, we have 
\begin{equation}
\label{norm1}
||1||_{ Exp L ^{1/n+\a}(K)}
 = {1\over \log^{n +\a} \left ( 1 + {1\over \lambda_\mG(K)} \right)}.
\end{equation}
\begin{cor}\label{horl}
Let $\mu = \mathbb{1}_{\W \cap \mG} g \lambda _\mG$ be a measure with non-negative density $g \in L \log ^{n+\a} L (\W  \cap \mG).$ Then 
there exists a unique bounded function
$\f \in \m F^a(f) \cap L^\infty (\W) $ such that $(dd^c \f ) ^n = \mu $ and 
\begin{equation}\label{eq41}
0\le f - \f \le  C ||g||_{  L \log^{n+\a} L }^{1\over n },
\end{equation}
where $C>0 $ only depends on $n, \ \a, \ \W $  and $\mG.$

\end{cor}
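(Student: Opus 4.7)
The plan is to reduce the statement to Theorem \ref{kolo} by verifying the capacity domination hypothesis with an explicit admissible $\e$ such that $\int_0^\infty \e(t)dt<\infty$, and then extract the quantitative bound on $f-\f$ directly from the displayed $L^\infty$ estimate at the end of Theorem \ref{kolo}.

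First I would estimate $\mu(K)$ for any compact $K\subset \W$ by means of the Hölder inequality in the Orlicz duality $L\log^{n+\a}L$ versus $\mathrm{Exp}\,L^{1/(n+\a)}$ stated just before the corollary:
\begin{equation*}
\mu(K)\;=\;\int_{K\cap \mG} g\, d\lambda_\mG
\;\le\; C_{n,\a}\,\|g\|_{L\log^{n+\a}L}\,\|\mathbb{1}_{K\cap\mG}\|_{\mathrm{Exp}\,L^{1/(n+\a)}}.
\end{equation*}
Using the explicit formula (\ref{norm1}) the second factor equals $\log^{-(n+\a)}(1+1/\lambda_\mG(K\cap\mG))$. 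The next ingredient is the Alexander--Taylor type comparison between the Lebesgue measure on a generic real subspace and the Monge--Ampère capacity of the ambient domain, which is exactly the content cited from \cite{BJZ}: there exist $A,B>0$ depending only on $\W$ and $\mG$ such that
\begin{equation*}
\lambda_\mG(K\cap \mG)\;\le\; A\,\exp\bigl(-B\,\mathrm{Cap}_\W(K)^{-1/n}\bigr),
\end{equation*}
so that $\log(1+1/\lambda_\mG(K\cap\mG))\ge B'\,\mathrm{Cap}_\W(K)^{-1/n}$.

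Combining the two inequalities yields
\begin{equation*}
\mu(K)\;\le\; C\,\|g\|_{L\log^{n+\a}L}\,\mathrm{Cap}_\W(K)^{(n+\a)/n}.
\end{equation*}
Setting $x=\mathrm{Cap}_\W(K)$ and $\e(t):=C_1\|g\|_{L\log^{n+\a}L}^{1/n}\,e^{-\a t/n}$, a short manipulation shows that the right hand side equals $x\bigl(\e(-\log x/n)\bigr)^n$, so the hypothesis (\ref{dom}) of Theorem \ref{kolo} is satisfied with this $\e$. Since $\int_0^\infty \e(t)dt = (n/\a)\,C_1\|g\|_{L\log^{n+\a}L}^{1/n}<\infty$, the assumption of the supplementary remark following Theorem \ref{kolo} is met, giving existence and uniqueness of $\f\in \mF^a(f)$ with $(dd^c\f)^n=\mu$ and
\begin{equation*}
0\le f-\f\;\le\; e\!\int_0^\infty \e(t)dt + e\,\e(0) + \mu(\W)^{1/n}.
\end{equation*}

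Each of the first two terms is already of the form $C\|g\|_{L\log^{n+\a}L}^{1/n}$ by construction of $\e$. For the last one I would apply Hölder once more with $K=\W\cap\mG$ to get $\mu(\W)\le C_{n,\a,\W,\mG}\,\|g\|_{L\log^{n+\a}L}$, hence $\mu(\W)^{1/n}\le C\|g\|_{L\log^{n+\a}L}^{1/n}$, which together with the previous display yields (\ref{eq41}) and $\f\in L^\infty(\W)$. The step I expect to require the most care is the second one: producing the Alexander--Taylor exponential estimate for $\lambda_\mG$ rather than the full Lebesgue measure on $\C^n$; this is where the generic position of $\mG$ is essential, and I would invoke \cite{BJZ} to supply the necessary exponential decay. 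Everything else is a rearrangement into the $F_\e$ framework and an application of Theorem \ref{kolo}.
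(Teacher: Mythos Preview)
Your proposal is correct and follows essentially the same route as the paper: H\"older in the Orlicz duality together with (\ref{norm1}), the exponential $\lambda_\mG$--capacity estimate from \cite{BJZ}, the resulting bound $\mu(K)\le C\|g\|_{L\log^{n+\a}L}\,\mathrm{Cap}_\W(K)^{(n+\a)/n}$, and the choice $\e(t)=C_1\|g\|_{L\log^{n+\a}L}^{1/n}e^{-\a t/n}$ to feed into Theorem~\ref{kolo}. You are in fact a bit more explicit than the paper in bounding the term $\mu(\W)^{1/n}$, which the paper absorbs directly into the final constant.
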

\begin{proof}
We claim that there exists a constant $C>0$ such that 
\begin{equation}\label{eq42}
\mu (K) \le \left( C ||g||_{  L \log ^{n+\a}L}^{1\over n} \right ) ^n \mrm{cap}_\W  ^{\a + n \over  n} (K), \quad 
\text{for all compact  }\ K \sub \W.
\end{equation}
Indeed, H\"older's inequality and inequality (\ref{eq42}) yield 
\begin{equation}\label{eq43}
\mu (K) \le  ||g||_{  L \log ^{n+\a}L }{1\over \log^{n +\a} \left ( 1 + {1\over \lambda_\mG(K)} \right)} (K), \quad 
\text{for all compact }\ K \sub \W.
\end{equation}
By \cite{BJZ} we have
\begin{equation}\label{eq44}
\lambda_\mG (K) \le C \exp  \left (-{ 1\over \mrm{cap}^{1\over n}_\W(K)} \right),  \quad
\text{for all compact }\ K \sub \W,
\end{equation}
where $C>0$ is a constant which depends only on $\W $ and $ \mG .$ \\
The inequality (\ref{eq42}) follows by combining (\ref{eq43}) and (\ref{eq44}).

Then we apply Theorem \ref{kolo} with
$$
\e(x) =  C ||g||_{  L \log ^{n+\a}L} C ||g||_{  L \log ^{n+\a}L}^{1\over n}  e^{-{\a x \over n}},
$$
which yields to
$$
0\le f - \f \le e\int_{0}^x  \e(t) dt +  e \e(0) + \mu(\W)^{1\over n}
  \le  C ||g||_{  L \log^{n+\a} L }^{1\over n }.
$$
\end{proof}
 \subsection*{Acknowledgements}
The author wishes to thank the referee for his
careful reading and for his remarks which helped to improve the exposition.


\end{document}